\newtheorem{theorem}{Theorem}
\newtheorem{definition}{Definition}
\newtheorem{proposition}{Proposition}
\newtheorem{example}{Example}
\newcommand\R{\ensuremath{\mathbb R}}
\newcommand\kbsm{\mathcal{K}}
\newcommand\submodule{\mathcal{S}}
\newcommand\basis{\mathcal{B}}
\newcommand\fr{\text{fr}}
\newcommand{\hatLf}{\mathcal{\hat L}_{\Sigma,n}^{\fr}}
\newcommand{\hatBf}{\mathcal{\hat B}_{\Sigma,n}^{\fr}}
\newcommand{\hatKf}{\mathcal{\hat K}_{\Sigma,n}^{\fr}}
\newcommand{\Lf}{\mathcal{L}_{\Sigma,n}^{\fr}}
\newcommand{\Bf}{\mathcal{B}_{\Sigma,n}^{\fr}}
\newcommand{\Kf}{\mathcal{K}_{\Sigma,n}^{\fr}}
\newcommand{\Tc}{T_\text{col}}
\DeclareMathOperator{\sign}{sign}
\newcommand\double[1]{
 \raisebox{-0.7em}{\begin{overpic}[page=#1]{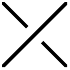}
 \put (21,38) {\scriptsize$i$}
 \put (72,38) {\scriptsize$j$}
\end{overpic}}}
\newcommand\reid[1]{\raisebox{-1.0em}{\includegraphics[page=#1]{images.pdf} }}
\newcommand\icon[1]{\ensuremath{\raisebox{-0.7em}{\includegraphics[page=#1]{images.pdf}}}} 
\newcommand\iconk[1]{\ensuremath{\Big[\icon{#1}\Big]}} 
\newcommand\bigicon[1]{\ensuremath{\raisebox{-1.1em}{\includegraphics[page=#1]{images.pdf}}}}
\newcommand\exicon[1]{\ensuremath{\raisebox{-1.1em}{\includegraphics[page=#1]{images.pdf}}}} 
\newcommand\extrafootertext[1]{%
    \bgroup
    \renewcommand\thefootnote{\fnsymbol{footnote}}%
    \renewcommand\thempfootnote{\fnsymbol{mpfootnote}}%
    \footnotetext[0]{#1}%
    \egroup
}
\begin{document}

\title{Invariants of multi-linkoids}
\author[1]{Bo\v{s}tjan Gabrov\v{s}ek\thanks{bostjan.gabrovsek@fs.uni-lj.si}
}
\affil[1]{Faculty of Mechanical Engineering and Faculty of Education, University of Ljubljana, Ljubljana, Slovenia}

\author[2]{Neslihan G\"ug\"umc\"u\thanks{neslihangugumcu@iyte.edu.tr}}
\affil[2]{Department of Mathematics, Izmir Institute of Technology, Izmir, Turkey}




\date{\today}

\maketitle

\begin{abstract}
In this paper, we extend the definition of a knotoid that was introduced by Turaev, to multi-linkoids that consist of a number of knot and knotoid components. We study invariants of multi-linkoids that lie in a closed orientable surface, namely the Kauffman bracket polynomial, ordered bracket polynomial, the Kauffman skein module, and the $T$-invariant in relation with generalized $\Theta$-graphs.
\end{abstract}

\extrafootertext{{\it 2020 MSC.} Primary 57K12, secondary 57M15.}
\extrafootertext{{\it Keywords.} Knotoid, multi-linkoid, spatial graph, Kauffman bracket polynomial, Kauffman bracket
skein module, theta-curve, theta-graph.}

\section*{Introduction}

Knotoids were defined by Turaev~\cite{Turaev2012} as immersions of the unit interval in an arbitrary surface. They can be regarded as open-ended knot diagrams with two endpoints, on which we consider an equivalence relation induced by the Reidemeister moves.
In this sense, the theory of knotoids in $S^2$ generalizes classical knot theory and has a natural connection with the theory of virtual knots~\cite{Kauffman1999, Kauffman2012} through the \emph{virtual closure}. The intrinsic invariants of knotoids, as well as how knot invariants extend as knotoid invariants, have been studied by many researchers.
See \cite{Barbensi2018,  Barbensi2021, Bartolomew2021, Diamantis2022, Gueguemcue2017a, Gueguemcue2021b,  Gueguemcue2021a, Gueguemcue2019, Gueguemcue2021, Korablev2017, Kutluay2020, Tarkaev2021, Turaev2012} for studies on knotoids. 

Knotoids can be considered in a more topological setting, 
as planar knotoids are equivalent to relative knots in $\mathbb{R}^3 \setminus \text{\{two parallel lines\}}$~\cite{ Gueguemcue2017a, Przytycki1991}. In relation to this topological setting, Turaev showed that knotoids in $S^2$ are in one-to-one correspondence with simple $\Theta$-curves and multi-knotoids in $S^2$, immersions of the unit interval and a finite number of circles in $S^2$, are in one-to-one correspondence with simple theta-links~\cite{Turaev2012}.

Topological structures are also an emerging field in modern chemistry~\cite{Forgan2011} as knots have been identified DNA~\cite{Siebert2017} and proteins~\cite{Liang1994, Virnau2006}. Research suggest that knots increase thermal and kinetic stability of the molecule~\cite{Sulkowska2008}, as well as have important functional roles~\cite{Virnau2006}. A protein's backbone natively forms an embedded interval in 3-space. Classical studies of protein topology (i.e.\ determining the knot type) rely on closing this interval by some closure method (closure methods are thoroughly discussed in~\cite{Millett2013}).
Since closures are often ambiguous (in the case when the protein termini are not located close to the minimal convex surface enveloping the protein), knotoids have been identified as natural candidates to study open-knotted proteins~\cite{Goundaroulis2017, Goundaroulis2017a, Gueguemcue2017}. 

One can extend the notion of a knotoid by considering several closed and open-ended components. 
A \emph{multi-linkoid} is a union of a finite number of immersed unit intervals and circles in a closed orientable surface. 

We expect that multi-linkoids would suggest a new setting for the topological analysis of several mutually entangled polymer chains or subchains via the invariants we introduce here.

The main goal of this paper is to generalize the mentioned concepts to multi-linkoids and to introduce invariants for them.

The paper is organized as follows. 
Section~\ref{sec:pre} is an overview of the required notions related to multi-linkoids. 
In Section~\ref{sec:kauffman} we extend the Kauffman bracket polynomial to multi-linkoids. In addition, we strengthen this invariant to the ordered Kauffman bracket polynomial for multi-linkoids with an ordering on its knotoid components. 
In Section~\ref{sec:kauffman2} we introduce the Kauffman skein module of multi-linkoids and show that the module if freely generated. 
In Section~\ref{sec:spatial} we study multi-linkoids in the topological setting. We show that multi-linkoids are equivalent to relative links in $\mathbb{R}^3$ and introduce simple generalized $\Theta$-graphs, which are in one-to-one correspondence with multi-linkoids, where we take also into consideration the possibility of an ordering on the components.

In Section~\ref{sec:T} we utilize the $T$-invariant for spatial graphs~\cite{Kauffman1989} and strengthen it by introducing the colored $T$-invariant for distinguishing multi-linkoids and ordered multi-linkoids.

\section{Preliminaries}\label{sec:pre}
\begin{definition}\normalfont

A \emph{multi-linkoid diagram} in a closed (oriented or unoriented) surface $\Sigma$ is an immersion of a number of unit intervals $[0,1]$ and unit circles $S^1$ into $\Sigma$. This immersion is generic in the sense that there are only a finite number of intersections of the image that we endow each with under or over information, and regard them as \emph{crossings} of the multi-linkoid diagram.

The images of the points $0$ and $1$ of the unit intervals are considered to be distinct from each other and are called the endpoints of the diagram. We consider an orientation on each of the components of a multi-linkoid diagram in a way that each open-ended component is oriented from the image of $0$ named specifically as \emph{tail} to the image of $1$ named specifically as \emph{head}. The special cases of multi-linkoids are 
\emph{knotoids} consisting of only one open (knotoid) component and \emph
{multi-knotoids} consisting of only one open component and a number of closed (knot) components.

\end{definition}
\begin{figure}[H]
\centering

\includegraphics[scale=0.2]{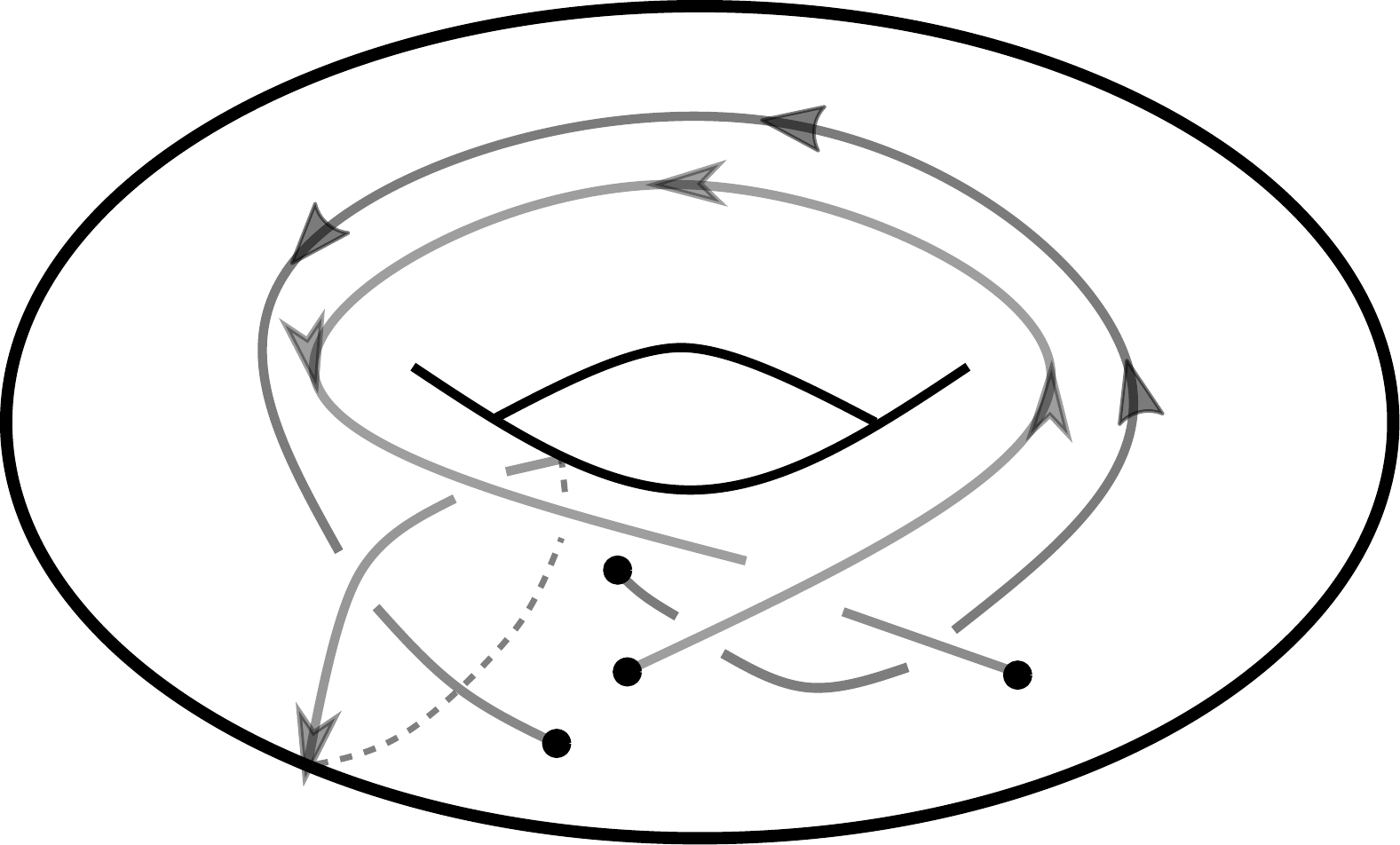}
\caption{A multi-linkoid diagram with three components on the torus.}
\label{}

\end{figure}

As in the case of knotoids~\cite{Turaev2012}, we consider multi-linkoids up to the equivalence relation generated by the Reidemeister moves.
\begin{definition}
Two multi-linkoids are \emph{equivalent }if and only if they differ by a finite sequence of Reidemeister moves R-0 (surface isotopy),  R-I, R-II, and R-III, shown in the Figure~\ref{fig:reid}.  It is not allowed to move the endpoints over or under a strand as depicted in Figure~\ref{fig:forb}. 
\end{definition}

\begin{figure}[ht]
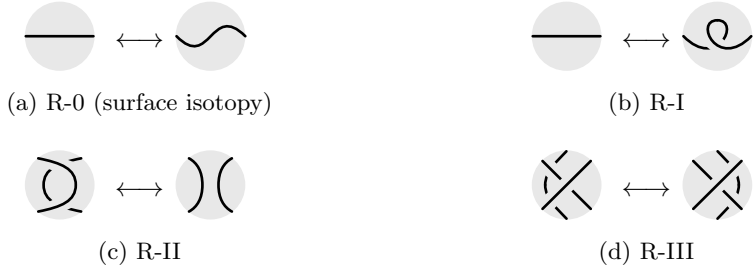

     \centering
     \begin{subfigure}[b]{0.43\textwidth}
         \centering
         \reid{41} $\longleftrightarrow$ \reid{42}
         \caption{R-0 (surface isotopy)}
         \label{fig:r0}
     \end{subfigure}
     \begin{subfigure}[b]{0.43\textwidth}
         \centering
         \reid{41} $\longleftrightarrow$ \reid{43}
         \caption{R-I}
         \label{fig:r1}
     \end{subfigure}\\[1em]
     \begin{subfigure}[b]{0.43\textwidth}
         \centering
         \reid{24}  $\longleftrightarrow$  \reid{31}
         \caption{R-II}
         \label{fig:r2}
     \end{subfigure}
     \begin{subfigure}[b]{0.43\textwidth}
         \centering
         \reid{32}  $\longleftrightarrow$  \reid{35}
         \caption{R-III}
         \label{fig:r3}
     \end{subfigure}

        \caption{Local Reidemeister moves for multi-linkoids on a surface $\Sigma$.}
        \label{fig:reid}
\end{figure}

\begin{figure}[ht]
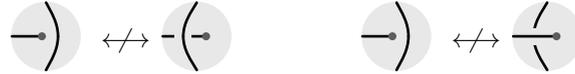

     \centering
         \reid{38} $\centernot\longleftrightarrow$ \reid{39}  \qquad\qquad \reid{38} $\centernot\longleftrightarrow$  \reid{40}
         \caption{The forbidden move.}
        \label{fig:forb}
\end{figure}

In~\cite{Moltmaker2021}  \emph{framed knotoids} and their quantum invariants were introduced. 
\begin{definition}
A \emph{framed knot} is a knot equipped with a transversal, smooth, everywhere nonzero vector field (or equivalently, an embedding of a ribbon/annulus in the 3-space).
\end{definition}
The move R-I changes the framing of a knotoid, we thus replace it with the move FR-I depicted in Figure~\ref{fig:framed1}.
Framed multi-linkoids are now naturally defined by the following definition.

\begin{definition}\normalfont
A \emph{framed multi-linkoid} on an orientable surface $\Sigma$ is an equivalence class of knotoid diagrams under
the equivalence generated by R-0, FR-I, R-II, and R-III.
\end{definition}

In an oriented multi-linkoid diagram we assign to each crossing a sign using the convention $\sign\Big(\icon{62}\Big) = 1$ and $\sign\Big(\icon{63}\Big) = -1$.

\begin{definition}\normalfont
The sum of signs over all crossings of an oriented multi-linkoid diagram of $L$ is called the \emph{writhe}, $w(L)$, of $L$. 
\end{definition}

Note that moves FR-I, R-II, and R-III do not change the writhe, $w(K)$ is thus an invariant of framed oriented multi-linkoids.

\begin{figure}[ht]
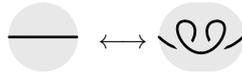

     \centering
  \reid{41} $\longleftrightarrow$ \reid{61}
         \caption{The move FR\textnormal{-}I (equivalent to the move $R1'$ in \cite{Moltmaker2021}.)}
        \label{fig:framed1}
\end{figure}

\begin{definition}\normalfont
An \emph{ordered multi-linkoid} diagram is a multi-linkoid diagram equipped with an ordering on its open components. 
We consider ordered multi-linkoids up to the equivalence relation generated by the Reidemeister moves.

\end{definition}
The ordering on the $n$ open components of a multi-linkoid diagram induces an ordering of the endpoints. Precisely, each endpoint is enumerated by an integer $i$ from $\{1, 2, \ldots, 2n\}$, starting from the first component with respect to the given ordering. In the sequel, some invariants of multi-linkoid diagrams are constructed with respect to the ordering induced at the endpoints, see Section \ref{sec:kauffman} for the ordered Kauffman bracket, Section \ref{sec:kauffman2} for the ordered Kauffman bracket skein module and Section \ref{sec:T} for the invariant $\Tc$.







\section{Kauffman bracket polynomial of multi-linkoids}\label{sec:kauffman}

In this section we extend the Kauffman bracket polynomial of knotoids to multi-linkoids in $S^2$ or $\mathbb{R}^2$.

Let $L$ be a multi-linkoid diagram in $S^2$ with $n$ crossings. Without any consideration of orientation on the components of $L$, we smooth all the crossings of $L$ by $A$-type and $B$-type smoothings that are determined by the four local regions adjacent to the crossings. The two regions that are swept by $90$ degrees rotation of the overpassing strand of a crossing in the counterclockwise direction are labeled with the letter $A$, and the remaining two regions are labeled with the letter $B$.  An $A$-\emph{type smoothing} of a crossing is to remove the crossing and connect two of the $A$-regions and a $B$-\emph{type smoothing} of a crossing is to remove the crossing and connect two of the $B$-regions that adjacent to the crossing. See Figure \ref{fig:smoothing}. 

The resulting collection of curves from a chosen smoothing type of each crossing of $L$ is called a \emph{state} of $L$ and denoted by $\sigma_i$ where $i \in \{0,1\}^n$.  Each state of $L$ contains a number of simple closed curves and simple arcs in $S^2$ containing the endpoints of $L$. Each circle component is then assigned the value $-A^2 - A^{-2}$ and each open component is assigned the variable $\lambda$ cofactored by the product of the labels $A$'s and $B$'s coming from the smoothing types made to obtain $\sigma$. We take $B = A^{-1}$ as in the knotoid bracket case~\cite{Turaev2012},  and we obtain the following Laurent polynomial in the variables $A, A^{-1}$ for $L$.

\begin{figure}[H]
\centering
\includegraphics[scale=0.17]{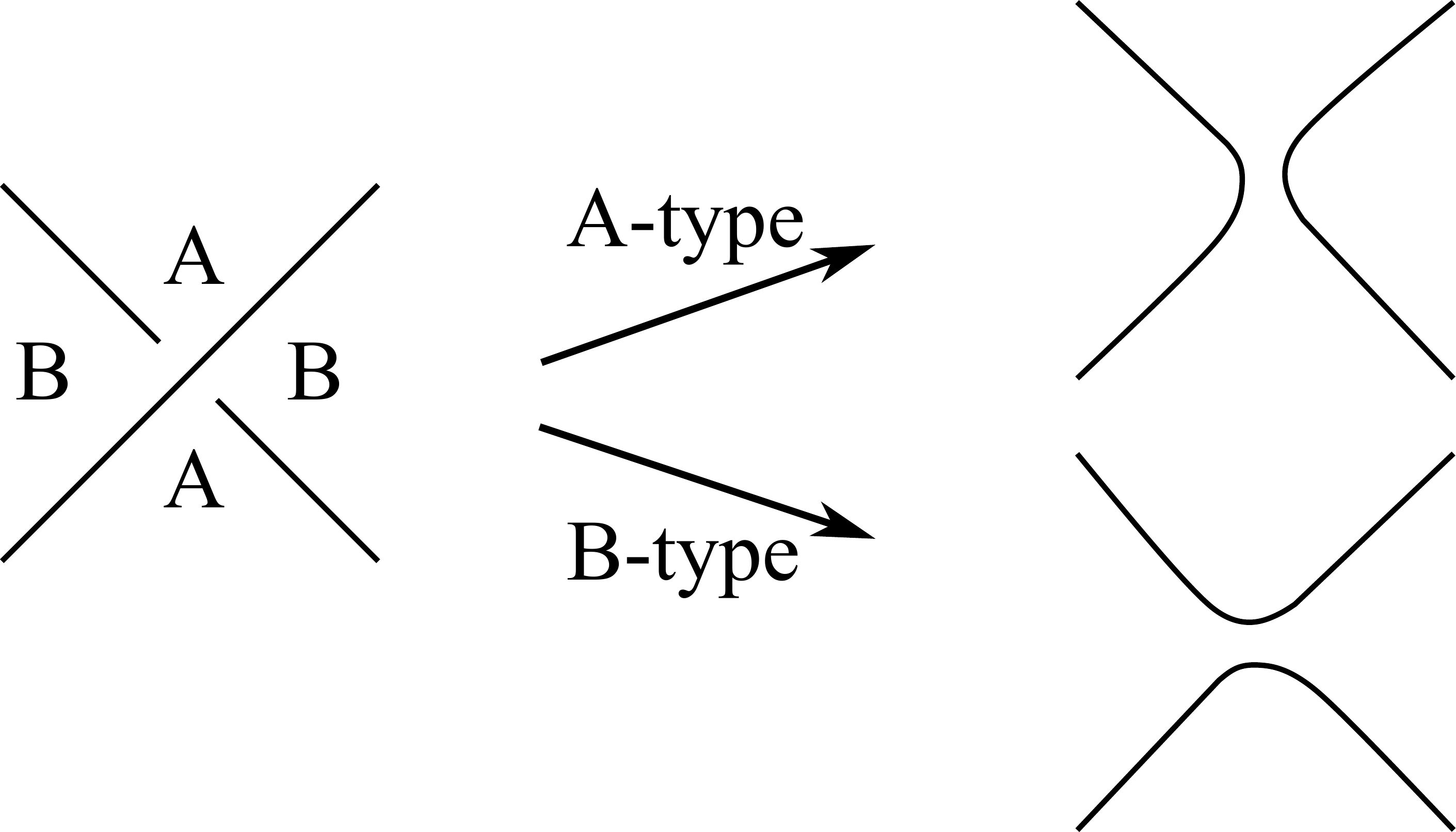}
\caption{Smoothing types of a crossing}
\label{fig:smoothing}

\end{figure}

\begin{definition}\normalfont

The \emph{Kauffman bracket polynomial} of $L$ is defined to be the sum
\begin{equation}
<L> (A^{\pm 1}, \lambda) = \Sigma _{\sigma_i} < L \mid  \sigma_i > (-A^2 - A^{-2} )^{ ||c|| } \lambda^{||l||},
\label{eq:kb}
\end{equation}
where $<L \mid \sigma >$  is the product of the labels of the state $\sigma_i$, $||c||$ is the number of closed components of $\sigma_i$ and $||l||$ is the number of open components of $\sigma_i$. 
\end{definition}

In the case of a multi-knotoid diagram, we obtain the usual Kauffman bracket polynomial by taking $\lambda=1$.

\begin{proposition}
The bracket polynomial is an invariant of framed multi-linkoids.
The normalization $$\overline{<L>} (A, \lambda) = (-A^3)^{-w(L)} <L> (A, \lambda),$$ is an invariant of multi-linkoids.
\end{proposition}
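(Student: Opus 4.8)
The plan is to verify invariance of the unnormalized bracket $\langle L\rangle(A^{\pm1},\lambda)$ under the three framed Reidemeister moves FR-I, R-II, and R-III, and then to correct for the remaining dependence (which is only a writhe factor) using the normalization. Since the state sum in~\eqref{eq:kb} is defined locally at each crossing, the strategy is the standard one for the Kauffman bracket: I would check that the state sum is unchanged when a local tangle is replaced by the other side of each move, with everything outside the local disk held fixed. Because each state of the whole diagram restricts to a state of the local tangle, it suffices to compare the contributions of the local configurations and confirm the coefficients match on both sides.

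First I would treat R-II and R-III. For R-II, I would expand the two crossings using the skein relation $\langle\icon{?}\rangle = A\,\langle\text{($A$-smoothing)}\rangle + A^{-1}\langle\text{($B$-smoothing)}\rangle$ applied twice, obtaining four terms; the cross terms combine with the closed-loop value $-A^2-A^{-2}$ so that three of the four terms cancel and the surviving term reproduces the trivial tangle, giving invariance. For R-III, I would invoke the usual reduction: expanding one crossing and applying R-II to each resulting diagram shows both sides of R-III agree, so no separate loop-counting is needed. The key point throughout is that the local smoothings never create or destroy open components in a way that changes the exponent $\|l\|$ of $\lambda$, since these moves involve no endpoints; hence the $\lambda$-weighting is inert and the computation is identical to the closed case.

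Next I would handle FR-I. Unlike the classical R-I, the framed move FR-I (Figure~\ref{fig:framed1}) is designed precisely so that the kink it removes does not introduce the offending $-A^{\pm3}$ factor: expanding the single crossing of the kink gives an $A$-smoothing and a $B$-smoothing, one of which closes off a trivial loop contributing $-A^2-A^{-2}$ while the other leaves the strand unchanged, and the two contributions combine to give back exactly the original bracket with no monomial prefactor. I would verify this cancellation explicitly for both kink signs. This establishes that $\langle L\rangle(A^{\pm1},\lambda)$ is invariant under R-0, FR-I, R-II, and R-III, which is precisely the equivalence defining framed multi-linkoids, proving the first claim.

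For the second claim, I would recall that under the classical R-I move (not FR-I) the bracket picks up a factor of $-A^{\pm3}$ depending on the sign of the kink, and that this sign is exactly the change in the writhe $w(L)$. Since R-0, R-II, and R-III preserve $w(L)$ and leave $\langle L\rangle$ unchanged, the only move altering the prefactor $(-A^3)^{-w(L)}$ is R-I, under which the $(-A^3)^{-w(L)}$ factor changes by $(-A^3)^{\mp1}$ and thereby cancels the $-A^{\pm3}$ introduced by the bracket; hence $\overline{\langle L\rangle}$ is unchanged by R-I as well. The main obstacle to watch for is bookkeeping at the endpoints: I would confirm that a kink can occur on an open (knotoid) strand without affecting $\|l\|$, so that the $\lambda$-weight is untouched and the writhe-correction argument, which is purely about the $A$-prefactor, goes through verbatim as in the knotoid case of~\cite{Turaev2012}.
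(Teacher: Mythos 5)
Your treatment of R-II and R-III, your observation that the $\lambda$-bookkeeping is inert because these moves take place away from the endpoints, and your writhe-correction argument for the second claim all match the paper's (much terser) proof, which simply appeals to the knotoid case of~\cite{Turaev2012} and notes that the factor $-A^{3}$ produced by an R-I move is absorbed by $(-A^{3})^{-w(L)}$.

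However, your FR-I step contains a genuine error. You claim that expanding the single crossing of a kink yields two contributions --- one closing off a trivial loop worth $-A^{2}-A^{-2}$, the other leaving the strand unchanged --- which ``combine to give back exactly the original bracket with no monomial prefactor.'' That computation actually gives
$$A\,(-A^{2}-A^{-2}) + A^{-1} \;=\; -A^{3}$$
(and $-A^{-3}$ for the opposite kink sign); the loop term and the plain term never cancel the monomial, which is precisely why the classical R-I fails to preserve the bracket. So the explicit verification you propose would fail for either kink sign. The reason FR-I does preserve the bracket is different: the move FR-I (the move $R1'$ of~\cite{Moltmaker2021}, Figure~\ref{fig:framed1}) replaces a plain strand by a strand carrying \emph{two} kinks of opposite signs, so the bracket acquires the product $(-A^{3})(-A^{-3})=1$ and the writhe is unchanged --- equivalently, FR-I is a composition of two R-I moves whose $-A^{\pm 3}$ factors cancel. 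With that correction the rest of your argument goes through and yields both claims of the proposition.
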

\begin{proof}
Since the Reidemeister moves take place locally away from endpoints, the proof of the invariance runs similarly with the case of invariance of the bracket polynomial of knotoids under R-II and R-III moves, as given in  \cite{Turaev2012}.  Here we illustrate how the bracket polynomial changes under a R-I move for a knotoid diagram $D$. It is clear that the writhe of the given diagram, when considered to be oriented, is increased by one under the R-I move. 
Therefore, the multiplicative factor $-A^3$ added by the R-I move in the bracket polynomial can cancelled by the factor $(-A^3)^{-w(D)+1}$, where $-w(D)+1$ is clearly the writhe of the resulting diagram by the R-I move. See Figure~\ref{fig:beh}.
\end{proof}

\begin{figure}[H]
\centering
\includegraphics[scale=0.26]{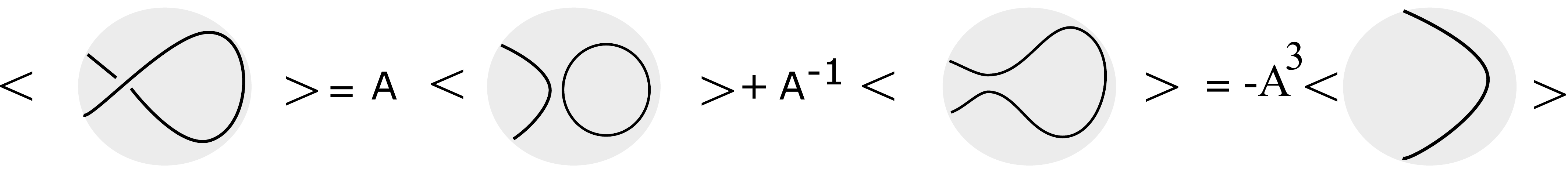}
\caption{Behaviour of the Kauffman bracket polynomial under a R-I move.}
\label{fig:beh}
\end{figure}



Let us consider an ordered version of the Kauffman bracket polynomial. 
Let $L$ be an ordered multi-linkoid diagram in $S^2$. We enumerate each of its endpoints by an integer  $i \in \{1,\ldots, 2n\}$, $n \geq 1$, according to the ordering on the open components of $L$.
  We apply the bracket smoothing at each crossing of $L$ to obtain the states of $L$.  Each open component of a state is assigned $\lambda_{ij}$, $i < j$ where $i,j$ are the integer labels at the endpoints of the components. 
 Circle components of a state are again assigned the value $-A^2-A^{-2}$. We obtain the following Laurent polynomial in variables $A, A^{-1}, \lambda_{ij}$ ($i < j$). 
  

\begin{definition}\normalfont
The \emph{ordered bracket polynomial} of $L$, $<L>_{\bullet}$ is defined to be the sum
\begin{equation}
<L>_{\bullet} (A^{\pm 1}, \{\lambda_{ij}\}_{i < j}) = \Sigma _{\sigma} <K \mid \sigma > (-A^2 - A^{-2} )^{ ||c|| } \prod_{\Lambda} \lambda_{ij},
\label{eq:okb}
\end{equation}
where the sum is taken over all states of $L$, $<K \mid \sigma >$ is the product of the smoothing labels of a state $\sigma$ and $\Lambda$ is the collection of open components in the state $\sigma$.
\end{definition}

Note that if $L$ is a multi-knotoid diagram,  the ordering on $L$ is trivial since there is only one knotoid component of $L$. Then, the ordered bracket polynomial of $L$ can be assumed to be equal to the Kauffman bracket polynomial of $L$ with $\lambda_{12}= \lambda=1$.

\begin{proposition}
The ordered bracket polynomial is an framed ordered multi-linkoid invariant. 
The normalization of the ordered bracket polynomial, $$\overline{<L>}_{\bullet} (A^{\pm 1}, \{\lambda_{ij}\}_{i< j}) = (-A^3)^{-w(L)} <L> (A, \{\lambda_{ij}\}_{i,j}),$$ is an invariant of ordered multi-linkoids.
\end{proposition}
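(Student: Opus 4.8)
The plan is to mirror the argument for the unordered Kauffman bracket in the previous proposition, adding a single bookkeeping observation to account for the refined variables $\lambda_{ij}$. Recall that after smoothing all crossings, every state $\sigma$ is a disjoint union of closed loops together with exactly $n$ open arcs, and these arcs partition the $2n$ labeled endpoints into $n$ pairs; the factor $\prod_\Lambda \lambda_{ij}$ records precisely this pairing. Thus the ordered bracket is the unordered bracket \emph{refined} by the data of how each state pairs up the endpoints. The key point I would establish is that each Reidemeister move, being supported in a disk disjoint from all endpoints, induces a state-correspondence that preserves this pairing.

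First I would treat R-0, FR-I, R-II, and R-III to show that $<L>_{\bullet}$ is a framed ordered invariant. Invariance of the unordered bracket under these moves is already established, and its proof proceeds by a local state-correspondence at the crossings involved in each move that preserves the closed-loop count $||c||$. The point I would add is that this same correspondence sends each open arc to an open arc with the same two ends: no move away from the endpoints can create, destroy, merge, or relabel an endpoint, and tracing an arc of a post-move state back through the unchanged part of the diagram recovers an arc of the corresponding pre-move state joining the identical labeled endpoints. Consequently the pairing data, and hence the factor $\prod_\Lambda \lambda_{ij}$, is preserved alongside $||c||$, so the ordered bracket inherits invariance term by term from the unordered computation, exactly as for knotoids in~\cite{Turaev2012}. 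For FR-I the additional observation is simply that, being framing-preserving by construction, it multiplies the bracket by $1$ rather than by $-A^{\pm3}$.

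Next I would add R-I to obtain invariance of the normalization under full ordered equivalence. As in the previous proposition (Figure~\ref{fig:beh}), an R-I move multiplies the bracket by $-A^{\pm3}$ while changing the writhe by $\pm1$. Since R-I too is supported away from the endpoints and acts on a single strand, the pairing-preservation observation applies verbatim, so $<L>_{\bullet}$ scales by exactly $-A^{\pm3}$ under R-I with no change to the $\lambda_{ij}$ data. The prefactor $(-A^3)^{-w(L)}$ then cancels this scalar precisely when $w(L)$ shifts by $\pm1$, leaving $\overline{<L>}_{\bullet}$ unchanged; together with the previous paragraph this yields invariance under all of R-0, R-I, R-II, and R-III.

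The single genuinely new ingredient is the pairing-preservation observation, and it is also the only place any care is required: I would state it cleanly as the assertion that a local move in a disk avoiding the endpoints induces a bijection of states under which open components correspond to open components joining the same labeled endpoints. Once this is phrased precisely, the remainder is either the skein-relation bookkeeping already carried out for the unordered bracket or a direct appeal to~\cite{Turaev2012}. I anticipate no substantive obstacle beyond making the endpoint-tracing argument sharp enough to rule out any reshuffling of the endpoint pairing.
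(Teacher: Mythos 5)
Your proposal is correct and follows essentially the same route as the paper's proof: invariance under FR-I, R-II, R-III is inherited from the unordered bracket because the moves are local and away from the endpoints, and the $-A^{\pm 3}$ factor from R-I is cancelled by the writhe normalization. Your explicit pairing-preservation observation (that the state correspondence sends open arcs to open arcs joining the same labeled endpoints) is exactly the detail the paper compresses into ``preserve both the framing and ordering,'' so you have simply made the paper's argument sharper, not different.
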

\begin{proof}
It is clear that the Reidemeister moves FR-I, R-II, and R-III preserve both the framing and ordering on a multi-linkoid diagram, and the invariance of the polynomial under these moves follows similarly as the invariance of the bracket polynomial. The ordered bracket polynomial behaves the same under an R-I move as the bracket polynomial, which implies that the normalization by the factor $(-A^3)^{-w(L)}$ becomes an ordered multi-linkoid invariant. 

\end{proof}






\begin{example}\normalfont
In Figure \ref{fig:ex2} two ordered linkoid diagrams with two components are given. Explicit computation shows that the normalized Kauffman bracket polynomial of these linkoids coincide, but they can be distinguished by the normalized ordered bracket polynomial. Precisely, we find,
$<L_1>_{\bullet} = (A^2 +1) \lambda_{12} \lambda_{34} + \lambda_{13}\lambda_{24}  + A^{-2} \lambda_{14}\lambda_{23}$ and $<L_2>_{\bullet} = (A^2 + 1) \lambda_{12}\lambda_{34} + \lambda_{14}\lambda_{23} + A^{-2} \lambda_{13}\lambda_{24}$.

\begin{figure}[H]
\centering

\includegraphics[scale=0.2]{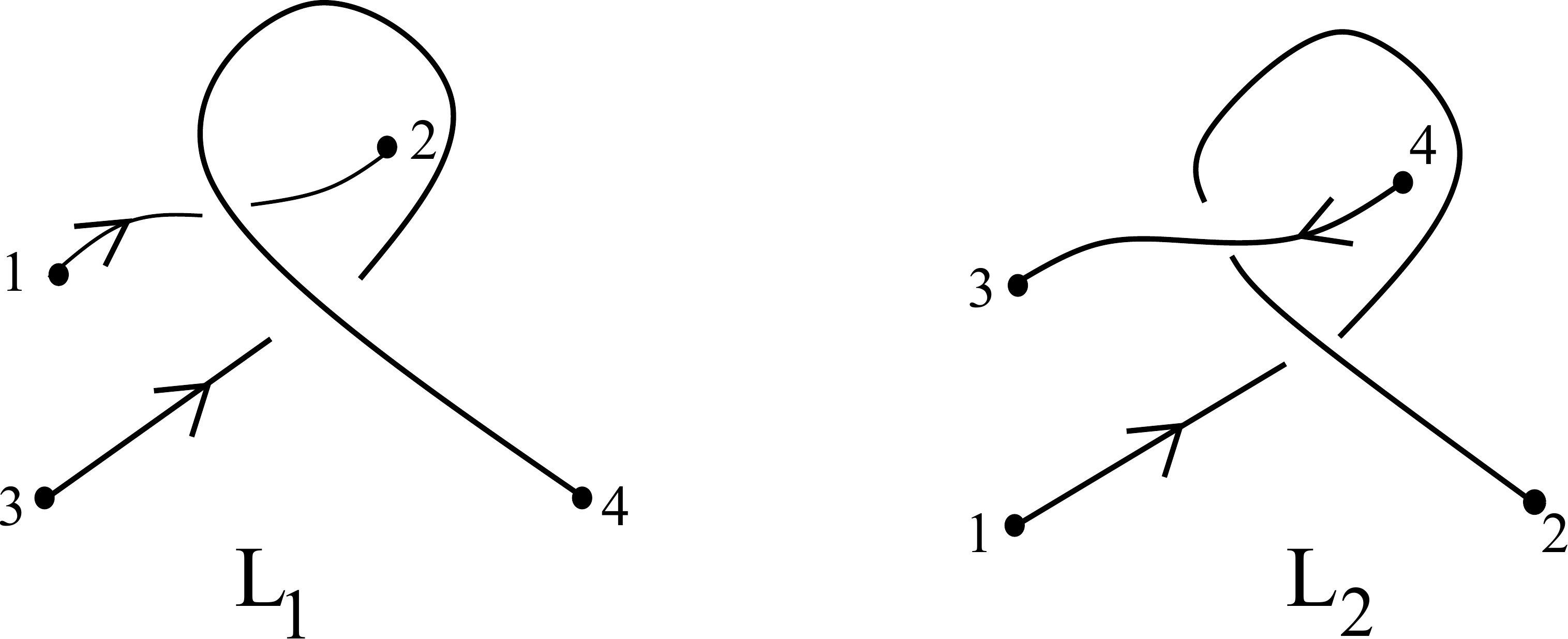}
\caption{Two ordered linkoids that are distinguished by the ordered bracket polynomial }
\label{fig:ex2}

\end{figure}

\end{example}


\subsection{The Kauffman bracket skein module}\label{sec:kauffman2}

Observe that in the bracket polynomial formulas~\eqref{eq:kb} and~\eqref{eq:okb}, 
we only consider the number of closed components $||c||$ in each state, where each component is assigned the term $-A^2-A^{-2}$.
We can extend the bracket polynomial so that in each state we also keep information about the homology classes of elements in $c$ in the complement $S^2 \setminus l$.
We do this by introducing the Kauffman bracket skein module (KBSM) of multi-linkoids. Furthermore, we extend this invariant to multi-linkoids in any closed, connected, orientable genus $g$ surface.

Skein modules were independently introduced by Turaev~\cite{Turaev1990} and Przytycki~\cite{Przytycki1991}, they can be viewed as generalizations of invariants based on the skein relation for knots in 3-manifolds.
The idea behind our construction (which is closely related to the original construction) is that we first construct a space of all possible linear combinations of multi-linkoids and in this space impose the skein and framing relation, which characterize the bracket polynomials given by formulas~\eqref{eq:kb} and \eqref{eq:okb}.
For similar constructions see~\cite{Diamantis2019, Diamantis2016a, Gabrovsek2017, Gabrovsek2021, Mroczkowski2018}.

\begin{definition}\normalfont
Let $\Sigma$ be a closed connected orientable surface of genus $g$.
Let $R$ be a commutative ring with an invertible element $A$ (e.g.\ the ring of Laurent polynomials $\mathbb{Z}[A,A^{-1}]$) and
let $\Lf$ be the set of framed multi-linkoids on $\Sigma$ with $2n$ endpoints.
Denote by $R[\Lf]$ the free $R$-module spanned by $\Lf$ and by $\submodule(\Lf,R,A)$ the submodule of $R[\Lf]$ generated by the following two expressions (relators):
\begin{subequations}
\label{eq:kauff}
\begin{align}
         \iconk{6} - A \iconk{7} - A^{-1} \iconk{8},       \label{eq:kauff1} \\
       \iconk{9} - (A^2-A^{-2})\iconk{10}, \label{eq:kauff2}
\end{align}
\end{subequations}
where  \iconk{6},  \iconk{7}, and \iconk{8} (resp. \iconk{9} and \iconk{10}) represent classes of multi-linkoids that are everywhere the same except inside a small disk where they look like the figures indicated.

The \emph{Kauffman bracket skein module of multi-linkoids in $\Sigma$ with $2n$ endpoints} is the quotient module 
$$\Kf(R,A) = R[\Lf] / \submodule([\Lf;R,A),$$
i.e.\ all formal finite linear sums of multi-linkoids in which we enforce the two relations obtained by the expressions~\eqref{eq:kauff}. 
\end{definition}


\begin{theorem}\label{thm:unmain}
Let $\Bf$ be the set of all multi-linkoids in $\Sigma$ with $2n$ endpoints without crossings and without trivial contractible components.
The module $\Kf(R,A)$ is freely generated by $\Bf$, i.e.\ $\Kf(R,A) = R[\Bf]$.
\end{theorem}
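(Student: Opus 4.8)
The plan is to establish the two halves of the free-basis statement separately: that $\Bf$ spans $\Kf(R,A)$, and that $\Bf$ is $R$-linearly independent in $\Kf(R,A)$.

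\textbf{Spanning.} First I would show every class in $\Kf(R,A)$ lies in the image of $R[\Bf]$. Given a framed multi-linkoid with $k$ crossings, applying relator~\eqref{eq:kauff1} at one crossing rewrites its class as an $R$-linear combination of two classes each with $k-1$ crossings. Inducting on $k$, any class becomes an $R$-linear combination of classes of crossingless multi-linkoids. Each such crossingless diagram is a disjoint union of essential arcs and curves together with finitely many trivial contractible circles; applying relator~\eqref{eq:kauff2} once per trivial circle deletes it at the cost of a scalar in $R$. Since these circles are mutually disjoint and disjoint from the rest of the diagram, the order of removal is irrelevant, and the outcome is a scalar multiple of an element of $\Bf$. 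Hence $\Bf$ spans $\Kf(R,A)$.

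\textbf{Independence (the main work).} To upgrade the spanning set to a basis I would construct an $R$-module homomorphism $\Phi\colon R[\Lf]\to R[\Bf]$ realizing the state sum, prove that it annihilates $\submodule(\Lf,R,A)$ so that it descends to $\bar\Phi\colon\Kf(R,A)\to R[\Bf]$, and observe that $\bar\Phi$ restricts to the identity on $\Bf$. Then any relation $\sum_b c_b\, b = 0$ holding in $\Kf(R,A)$ is carried by $\bar\Phi$ to the identical relation in the \emph{free} module $R[\Bf]$, forcing every $c_b=0$. To build $\Phi$, on a diagram $D$ I resolve every crossing by both smoothings weighted by $A^{\pm 1}$ as in~\eqref{eq:kauff1}, delete each resulting trivial contractible circle with its scalar factor as in~\eqref{eq:kauff2}, and record the remaining crossingless reduced diagram as the corresponding generator of $R[\Bf]$. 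Two things must then be checked: that this assignment descends to the equivalence relation generated by R-0, FR-I, R-II, and R-III (so it is a function of the class in $\Lf$), and that it sends each relator to $0$. The latter is immediate from the construction, while invariance under R-II, R-III, and FR-I is the standard Kauffman-bracket computation, identical to the knotoid case of~\cite{Turaev2012}, since these moves are supported in a disk away from the endpoints.

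\textbf{The main obstacle.} The genuinely delicate step is R-0 (surface-isotopy) invariance of the \emph{reduced} target diagram, i.e.\ that the value in $R[\Bf]$ is well defined as a function of the isotopy class rather than of a chosen diagram. On $S^2$ every simple closed curve bounds a disk, so every crossingless diagram reduces to a scalar times its unique arc pattern and there is nothing to distinguish; but on a surface of genus $g\ge 1$ the reduced diagrams carry essential arcs and circles, and I must guarantee that two such diagrams determine the same generator of $\Bf$ exactly when they are isotopic, and that non-isotopic reduced diagrams remain distinct generators. I would handle this by invoking the classification of disjoint systems of properly embedded arcs and simple closed curves on $\Sigma$ up to isotopy, so that the isotopy class of a crossingless multi-linkoid with no trivial contractible component is a complete invariant of the reduced diagram. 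Making this topological input precise — in particular that deletion of trivial circles commutes with ambient isotopy, and that no additional skein relations are concealed among essential multicurves — is where the argument demands the most care.
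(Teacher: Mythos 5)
Your proposal is correct and takes essentially the same route as the paper: spanning by resolving all crossings via~\eqref{eq:kauff1} and deleting trivial circles via~\eqref{eq:kauff2}, and independence by showing that this evaluation into $R[\Bf]$ is independent of all choices and invariant under the Reidemeister moves, hence yields a well-defined map on $\Kf(R,A)$ that is the identity on $\Bf$. The paper leaves the retraction homomorphism $\bar\Phi$ implicit (it verifies order-independence of smoothings and invariance under R-II and R-III directly, which amounts to the same argument), so your packaging is just a more explicit formulation of the paper's proof.
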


In particular
\begin{itemize}
\item $\kbsm_{S^2,1}$ is freely generated by the trivial knotoid
$\raisebox{-0.5em}{\includegraphics[page=11]{images.pdf}}$,
\item $\kbsm_{\R^2,1}$ is generated by the infinite set 
$\big\{ \raisebox{-0.8em}{\includegraphics[page=12]{images.pdf}}, \raisebox{-0.7em}{\includegraphics[page=13]{images.pdf}}, \raisebox{-0.7em}{\includegraphics[page=14]{images.pdf}}, \ldots \big\}$,
\item  $\kbsm_{\R^2, 2}$ is generated by the infinite set
$\raisebox{-1.5em}{\includegraphics[scale=.2]{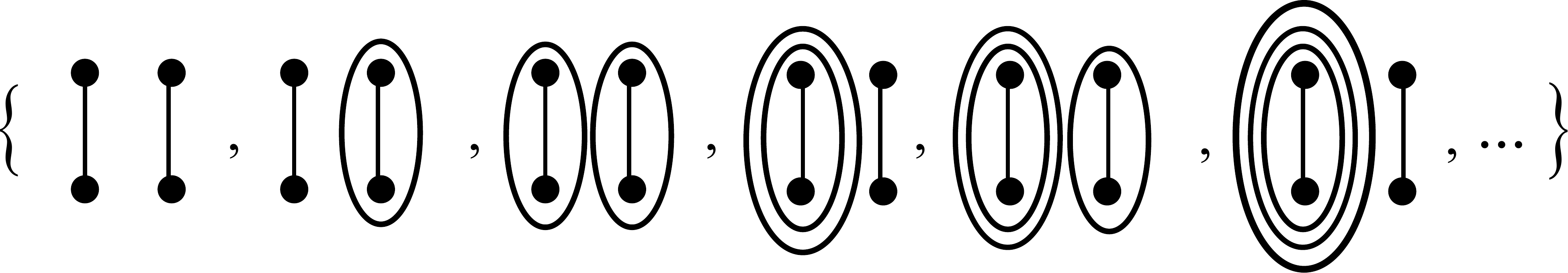}}$.
\item $\kbsm_{T^2, 1}$ is generated by the infinite set
$\raisebox{-1.5em}{\includegraphics[scale=.25]{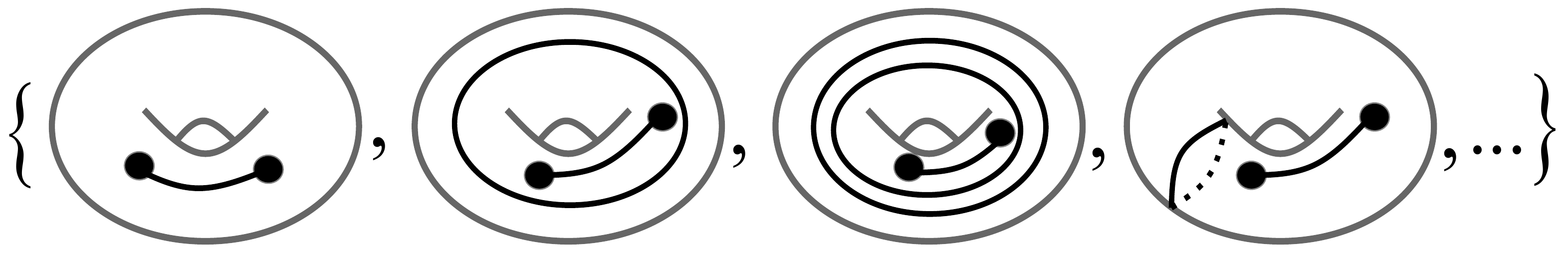}}$.
\end{itemize}

\begin{proof}
In order to prove the theorem, it is enough to show that $\Bf$ is a basis for $\Kf(R,A)$ and that for a given multi-linkoid $L \in \Lf$ the expression $[L] \in \Kf(R,A)$, written in terms of the basis, is unique.

For a given representative $L$ of $[L] \in \Kf(R,A)$, we can first remove all crossings using~\eqref{eq:kauff1} and then remove all trivial components using~\eqref{eq:kauff2}, we end up with a formal linear sum of elements $\Bf$. Elements $\Bf$ thus generate of the module.

To show that the expression $[L]$, evaluated in $\Bf$, is unique, 
we need to show that it does not depend on any choice we can make during the computation of $[L]$ and that it is invariant under Reidemeister moves.
We enumerate the crossings of $L$ by ordinals $1, 2, \ldots, k$. We first show that the result does not depend on the order we perform crossing eliminations via~\eqref{eq:kauff1}.

Let \double{15} represent a multi-linkoid with crossings $i$ and $j$ marked. The result does not depend on the order of crossings smoothings:
$$\double{15} = A\double{16} + A^{-1}\double{17} =  A^2\double{18} + \double{19} + \double{20} + A^{-2}\double{21},$$
$$\double{15} = A\double{22} + A^{-1}\double{23} =  A^2\double{18} + \double{19} + \double{20} + A^{-2}\double{21}.$$
In addition, the result does not depend on the elimination order of trivial components using the framing relation~\eqref{eq:kauff2}.

Proving invariance is similar to that in the classical case. The expression is invariant under Reidemeister move R-II:
$$\reid{24} = A \reid{26}+ A^{-1}\reid{25} = A^2\reid{30} + \reid{29} + \reid{27}+A^{-2}\reid{28} = \reid{31},$$
where we used the framing relation~\eqref{eq:kauff2} in the last equality.
The expression is also invariant under Reidemeister move R-III:
$$\reid{32} = A \reid{33}+ A^{-1}\reid{34} = A\reid{36} + A^{-1}\reid{34} = \reid{35},$$
where the second equality holds by invariance under R-II.
\end{proof}

Let $L$ be a multi-linkoid in $\Sigma$ with $2n$ endpoints. We denote by $[L]_{\Bf}$ the class of $L$ in $\Kf$ written in terms of elements in the basis $\Bf$ (or just $[L]$ if we fix the basis and the ambient space is known from context).
Due to Theorem~\ref{thm:unmain}, $[L]$ is an invariant of ordered framed multi-linkoids.

As in the classical case, we can obtain an invariant of non-framed linkoids by multiplying it by $(-A^3)^{-w(L)}$.
The expression
$$\overline{[L]}_{\Bf} = (-A^3)^{-w(L)} \, [L]_{\Bf}$$
is an invariant of multi-linkoids in $\Sigma$.

Let us now consider the ordered case.
For our construction, we will need the set of multi-linkoids with arbitrary ordering on the vertices (not necessarily consecutive on the endpoints on the same component), i.e.\ each endpoint is assigned exactly one value in $\{1,2,\ldots,2n\}$. We call such multi-linkoids \emph{vertex-ordered multi-linkoids.}

Let the set $\hatLf$ of all \emph{vertex-ordered multi-linkoids} with $n$ open components and repeat the construction.
The quotient module 
$$\hatKf(R,A) = R[\hatLf] / \submodule([\hatLf;R,A)$$
is the \emph{Kauffman bracket skein module of vertex-ordered multi-linkoids in $\Sigma$ with $2n$ endpoints}.
As ordered multi-linkoids are just special cases of vertex-ordered multi-linkoids, we will obtain an ordered multi-linkoids invariant through $\hatKf(R,A)$.


\begin{theorem}\label{thm:main}
Let $\hatBf$ the set of all isotopy classes of vertex-ordered multi-linkoids in $\Sigma$ with $2n$ endpoints without crossings and without trivial contractible components.
The module $\hatKf(R,A)$ is freely generated by the basis $\hatBf$, i.e.\ $\hatKf(R,A) = R[\hatBf]$.
\end{theorem}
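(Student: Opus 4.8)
The plan is to reduce Theorem~\ref{thm:main} to Theorem~\ref{thm:unmain} by observing that the vertex-ordering is a purely combinatorial decoration that does not interact with the skein or framing relations~\eqref{eq:kauff}, which are supported inside a small disk away from the endpoints. First I would make this precise: a vertex-ordered multi-linkoid is a multi-linkoid together with an assignment of distinct labels $\{1,2,\ldots,2n\}$ to its $2n$ endpoints, and since the relators~\eqref{eq:kauff1} and~\eqref{eq:kauff2} only alter a diagram inside a disk containing no endpoints, every term appearing in either relator carries the \emph{same} endpoint labeling as the original. Hence the submodule $\submodule(\hatLf;R,A)$ splits as a direct sum over all fixed endpoint-labelings $\ell$, and correspondingly $R[\hatLf]$ and the quotient $\hatKf(R,A)$ split as direct sums indexed by $\ell$.

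The key steps, in order, are as follows. First, fix an endpoint-labeling $\ell$ and restrict attention to the summand generated by vertex-ordered multi-linkoids whose endpoint labels are exactly $\ell$; call this summand $\hatKf^{\ell}$. Second, observe that forgetting the labels gives a bijection between vertex-ordered multi-linkoids carrying the labeling $\ell$ and ordinary (unlabeled) framed multi-linkoids on $\Sigma$ with $2n$ endpoints, and that this bijection is compatible with the relators, since the disk in which a relator acts is disjoint from the endpoints and the labeling is merely carried along. This yields a module isomorphism $\hatKf^{\ell}(R,A) \cong \Kf(R,A)$. Third, apply Theorem~\ref{thm:unmain} to conclude that $\hatKf^{\ell}(R,A)$ is freely generated by the crossingless, trivial-component-free representatives carrying the labeling $\ell$. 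Finally, assemble the direct sum over all labelings $\ell$: the generators assemble to exactly $\hatBf$, so $\hatKf(R,A) = \bigoplus_{\ell} \hatKf^{\ell}(R,A) = R[\hatBf]$, which is the desired statement.

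For the generation part I would argue directly, mirroring the proof of Theorem~\ref{thm:unmain}: given any $L \in \hatLf$, resolve all crossings by~\eqref{eq:kauff1} and remove all trivial contractible components by~\eqref{eq:kauff2}, obtaining a linear combination of elements of $\hatBf$, with the endpoint-labeling preserved throughout. For uniqueness (freeness) I would reuse verbatim the well-definedness arguments of Theorem~\ref{thm:unmain}: independence from the order of crossing smoothings, independence from the order of trivial-component removal, and invariance under R-II and R-III. Each of these is a local computation inside a disk away from the endpoints, so none of them moves or reorders a labeled endpoint, and the labeling rides along unchanged.

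The main obstacle is largely bookkeeping rather than mathematical depth: I must verify carefully that \emph{no} relator, and \emph{no} Reidemeister move used in the well-definedness argument, ever changes, permutes, or identifies two endpoint labelings. In particular the only subtle point is to confirm that the forbidden move (Figure~\ref{fig:forb}) and the endpoint-fixing conventions guarantee that endpoints are never dragged through a crossing, so that distinct labelings genuinely remain in distinct direct summands and the splitting $\hatKf(R,A) = \bigoplus_{\ell} \hatKf^{\ell}(R,A)$ is honest. Once this separation of labelings is established, the result follows immediately from Theorem~\ref{thm:unmain} applied summand by summand, and the promised generating sets (e.g.\ the pictured generators for $\kbsm_{\R^2,2}$) are recovered by ranging over all choices of $\ell$.
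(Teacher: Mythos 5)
Your reduction to Theorem~\ref{thm:unmain} via a direct sum over endpoint labelings has a genuine gap: the proposed decomposition does not exist. The elements of $\hatLf$ are \emph{isotopy classes}, and an isotopy is free to move endpoints around $\Sigma$ (only pushing them through strands is forbidden), so ``carrying the labeling $\ell$'' is not a well-defined property of a class: there is no external reference by which two different multi-linkoids can be said to carry the same labeling, and, worse, isotopies can permute labelled endpoints within a single class. Concretely, take $\Sigma=S^2$ and $n=2$, and consider the crossingless diagram consisting of two disjoint arcs. Rotating one arc in place by $180^\circ$ is an ambient isotopy swapping its two labelled endpoints, and a further isotopy swaps the two arcs; hence of the $4!=24$ labelings only the three pairings $\{1,2\}\{3,4\}$, $\{1,3\}\{2,4\}$, $\{1,4\}\{2,3\}$ yield distinct elements of $\hatBf$ (retaining component orientations would only raise this to $12$, still not $24$) --- which is precisely why the paper's computations are expressed in the pairing variables $\lambda_{12}\lambda_{34}$, $\lambda_{13}\lambda_{24}$, $\lambda_{14}\lambda_{23}$ and nothing finer. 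Your step two would instead produce $24$ summands, each containing its own copy of this diagram, so the claimed bijection between $\ell$-labelled classes and unlabelled classes is false: the fibers of the forgetful map $\hatLf\to\Lf$ are quotients of the set of labelings by the symmetry group of the underlying diagram, and they have varying cardinality. Nor is there a substitute index for the splitting: the data preserved by the relators (the positions of the labelled endpoints, which lie outside the disk) is destroyed by isotopy, while the data preserved by isotopy (the pairing recording which labels are joined by a component) is destroyed by relator~\eqref{eq:kauff1}, since smoothing a crossing between two open components re-pairs their endpoints. Your closing worry about the forbidden move is a red herring --- the failure has nothing to do with endpoints being dragged through strands --- and pinning the endpoints at fixed marked points to force the splitting changes the theory (on $T^2$ an arc wound around a handle is freely isotopic to a short arc, but not isotopic rel pinned endpoints), so that rescue fails as well.

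What does survive is exactly the direct argument in your third paragraph, which is in fact the paper's own proof: resolve all crossings by~\eqref{eq:kauff1} and delete trivial contractible components by~\eqref{eq:kauff2} to see that $\hatBf$ generates, then establish uniqueness by checking independence of the order of the operations and invariance under the Reidemeister moves exactly as in Theorem~\ref{thm:unmain}; all of these are local computations in disks disjoint from the endpoints, so the labels ride along unchanged. This argument works on all of $\hatKf(R,A)$ at once and needs no splitting. The fix is therefore to delete the direct-sum scaffolding --- including the final claim that $\hatBf$ is recovered ``by ranging over all choices of $\ell$,'' which overcounts the basis --- and let the direct argument carry the whole proof.
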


\begin{proof}
The proof is essentially the same to that of Theorem~\ref{thm:unmain}. 
We can reduce every link $L \in \hatLf$ to a formal sum of elements from $\hatBf$ by operations~\eqref{eq:kauff}. Clearly, the result also does not depend on the order we perform the operations.
To show invariance under Reidemeister moves, we repeat the argument from the proof of Theorem~\ref{thm:unmain} verbatim.
\end{proof}

As before, the normalized expression
$$\overline{[[L]]}_{\hatBf} = (-A^3)^{-w(L)} \, [L]_{\hatBf}$$
is an invariant of vertex-ordered multi-linkoids on $\Sigma$ and, as a special case, an invariant of ordered multi-linkoids on $\Sigma$.

The following propositions follow directly from construction.

\begin{proposition}\label{prop:kbsm-vs-kbp-un}
Given an (unordered) multi-linkoid $L$ in $S^3$ (or $\R^3$), the Kauffman bracket polynomial $<L> (A, \lambda)$
is obtained from $[L]$ by
replacing each open component with $\lambda$ and every closed component by the factor $(-A^2-A^{-2})$.
\end{proposition}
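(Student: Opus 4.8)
The plan is to unwind both sides of the claimed identity through the definitions and the structure provided by Theorem~\ref{thm:unmain}, showing they agree term by term. The key observation is that both the Kauffman bracket polynomial $<L>(A,\lambda)$ and the skein class $[L]$ are computed by the \emph{same} state-sum procedure: we resolve every crossing of $L$ using the $A$-type and $B$-type smoothings, producing a sum over all states $\sigma_i$, $i \in \{0,1\}^n$, each weighted by the product of smoothing labels $<L \mid \sigma_i>$. On the bracket-polynomial side, formula~\eqref{eq:kb} then assigns $(-A^2-A^{-2})$ to each closed component and $\lambda$ to each open component of $\sigma_i$. On the skein side, the relator~\eqref{eq:kauff1} performs exactly the same crossing resolution, and the framing relator~\eqref{eq:kauff2} collapses each trivial contractible circle into the scalar $(-A^2-A^{-2})$, leaving a basis element of $\Bf$ that is crossing-free and has no trivial components.

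First I would set up the induction on the number of crossings $k$ of $L$. In the base case $k=0$, the diagram $L$ is already crossing-free, so $<L>$ reduces to $\lambda^{||l||}(-A^2-A^{-2})^{||c||}$ for its closed and open components, while $[L]$ after applying~\eqref{eq:kauff2} to each trivial circle reduces to the corresponding scalar times a basis element; under the prescribed substitution (each open component $\mapsto \lambda$, each closed component $\mapsto -A^2-A^{-2}$) these coincide. For the inductive step, I would pick a crossing and apply~\eqref{eq:kauff1} to $[L]$, expressing it as $A[L_A] + A^{-1}[L_B]$ where $L_A$ and $L_B$ are the two smoothings with one fewer crossing; simultaneously, the state sum~\eqref{eq:kb} splits along the same crossing into the $A$- and $B$-contributions. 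By the inductive hypothesis the substitution turns $[L_A]$ and $[L_B]$ into $<L_A>$ and $<L_B>$ respectively, and since $<L> = A<L_A> + A^{-1}<L_B>$ by the definition of the state sum, the two agree.

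The final bookkeeping step is to confirm that the substitution map is well-defined on $\Kf(R,A)=R[\Bf]$, i.e.\ that sending each open component to $\lambda$ and each closed component to $(-A^2-A^{-2})$ respects the freeness guaranteed by Theorem~\ref{thm:unmain}. Because $\Bf$ consists precisely of crossing-free multi-linkoids with no trivial contractible components, and because in $S^2$ (or $\R^2$) every such component is isotopic to a standard arc or circle, the substitution is simply evaluation of each basis element at the assigned scalar/variable values, so it extends $R$-linearly to the whole module without ambiguity. I expect the main obstacle to be the verification that this substitution is consistent with the framing relation~\eqref{eq:kauff2}: one must check that collapsing a trivial circle to $(-A^2-A^{-2})$ in the skein module matches exactly the factor $(-A^2-A^{-2})^{||c||}$ appearing in~\eqref{eq:kb}, including the sign and the fact that \emph{non-trivial} closed components (which survive as separate basis elements in higher-genus $\Sigma$) are also sent to $(-A^2-A^{-2})$ under the stated substitution — a point that is unproblematic in $S^2$ or $\R^2$, where all closed curves bound disks, which is precisely why the proposition is restricted to this ambient space.
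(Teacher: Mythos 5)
Your proof is correct and takes essentially the same route as the paper, which offers no argument beyond the remark that the proposition ``follows directly from construction'': your induction on crossings, with relator~\eqref{eq:kauff1} mirroring the state-sum expansion, relator~\eqref{eq:kauff2} matching the $(-A^2-A^{-2})$ factors, and freeness (Theorem~\ref{thm:unmain}) making the substitution well defined, is precisely that construction spelled out. The only slip is in your closing paragraph: non-trivially embedded closed components are not a purely higher-genus phenomenon (the paper's own generating set for $\kbsm_{\R^2,1}$ contains circles encircling an endpoint, and for $n\ge 2$ even $S^2$ admits circles separating two arcs), but this is harmless for your argument, because the stated substitution sends \emph{every} closed component of a basis element to $(-A^2-A^{-2})$, which is exactly the factor that formula~\eqref{eq:kb} assigns to every closed state component, removable or not.
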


\begin{proposition}\label{prop:kbsm-vs-kbp-or}
Given an ordered multi-linkoid $L$ in $S^3$ (or $\R^3$), the ordered Kauffman bracket polynomial $<L>_\bullet (A, \{\lambda_{ij}\}_{i,j})$
is obtained from $[[L]]$ by
replacing each open component with endpoints, enumerated by $i$ and $j$, by $\lambda_{i,j}$ and replacing each closed component with the factor $(-A^2-A^{-2})$.
\end{proposition}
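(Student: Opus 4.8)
The plan is to prove Proposition~\ref{prop:kbsm-vs-kbp-or} by exhibiting the map that specializes the ordered skein module to the ordered bracket polynomial and checking that it is a well-defined $R$-module homomorphism. Concretely, I would define $\varphi \colon R[\hatLf] \to R[A^{\pm 1}, \{\lambda_{ij}\}_{i<j}]$ on each generating vertex-ordered multi-linkoid diagram without crossings (an element of $\hatBf$) by sending it to the monomial obtained by replacing every closed component with the factor $(-A^2-A^{-2})$ and every open component whose endpoints carry the labels $i$ and $j$ with $\lambda_{ij}$ (taking $i<j$), and then extending $R$-linearly. The goal is to show that this assignment, applied to an arbitrary diagram after reduction to the basis, reproduces exactly the sum in~\eqref{eq:okb}.

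The key steps, in order, are as follows. First I would verify that $\varphi$ respects the two defining relators of $\submodule([\hatLf;R,A)$, so that it descends to a well-defined homomorphism on the quotient $\hatKf(R,A)$: the skein relator~\eqref{eq:kauff1} maps to $0$ precisely because the $A$-type and $B$-type smoothings in the bracket expansion are weighted by $A$ and $A^{-1}$, matching the definition of $<K\mid\sigma>$; and the framing relator~\eqref{eq:kauff2} maps to $0$ because a trivial contractible closed component is assigned exactly $(-A^2-A^{-2})$ in~\eqref{eq:okb}. Second, invoking Theorem~\ref{thm:main}, I would use that every class $[[L]]$ in $\hatKf(R,A)$ is written uniquely in terms of the basis $\hatBf$ by iterated application of~\eqref{eq:kauff1} to remove crossings and~\eqref{eq:kauff2} to remove trivial components — and this iterated reduction is exactly the state-sum expansion defining~\eqref{eq:okb}, so that $\varphi([[L]])$ equals $<L>_\bullet$. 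Third, I would observe that the bookkeeping of endpoint labels is consistent: a reducing smoothing never merges two open components into one or splits one into two across the forbidden configuration, so each open arc in a state retains a well-defined pair of endpoint labels $\{i,j\}$, and $\varphi$ assigns the correct $\lambda_{ij}$.

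The main obstacle I expect is the label-tracking step rather than the algebra: one must check that the identification of open components with their endpoint-label pairs is compatible with the smoothing reductions used in Theorem~\ref{thm:main}, since a bracket smoothing can reconnect arcs and thereby change which endpoints lie on a common component. The point to verify carefully is that in a crossing-free diagram each open component has exactly two endpoints, so the pair $\{i,j\}$ and hence the monomial $\lambda_{ij}$ is unambiguous, and that this matches the product $\prod_\Lambda \lambda_{ij}$ over the open components $\Lambda$ of a state in~\eqref{eq:okb}. Once this correspondence between the reduction process in $\hatKf(R,A)$ and the state-sum in the ordered bracket is made explicit, the proposition follows directly from the construction, as the statement itself asserts. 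An entirely analogous, simpler argument — with the single variable $\lambda$ replacing the family $\{\lambda_{ij}\}$ — establishes the unordered companion Proposition~\ref{prop:kbsm-vs-kbp-un}.
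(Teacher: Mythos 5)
The paper itself offers no argument here---both specialization propositions are dismissed with ``follow directly from construction''---so your write-up is a legitimate formalization of what the authors leave implicit, and its substance is correct. One inconsistency in your setup needs repair, however: you declare $\varphi$ on all of $R[\hatLf]$ but assign values only to crossingless diagrams (elements of $\hatBf$), and then your first step asks whether $\varphi$ kills the relators~\eqref{eq:kauff1} and~\eqref{eq:kauff2}. That check is ill-posed as stated, since both relators involve diagrams with crossings, on which your $\varphi$ has no defined value. You must commit to one of two routes. Route one: define $\varphi(L) = {<L>}_\bullet$ on every diagram via the state sum of~\eqref{eq:okb}; then the relator check becomes meaningful---it is precisely the statement that the ordered bracket obeys the skein and framing relations---and the paper's earlier invariance proposition guarantees $\varphi$ is constant on framed equivalence classes, so it descends to $\hatKf(R,A)$; since on a crossingless diagram the state sum degenerates to your replacement rule, linearity over the basis expansion gives the proposition. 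Route two: define $\varphi$ only on $R[\hatBf]$, which by Theorem~\ref{thm:main} is all of $\hatKf(R,A)$, so well-definedness is automatic and no relator check is needed (or even possible); but then the entire burden falls on your second step, namely that reducing $[[L]]$ to the basis and then substituting reproduces the state sum of~\eqref{eq:okb}.

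Your bookkeeping observations are the right ones and are correct: the endpoint labels never move, every state has exactly $n$ open arcs (a smoothing can re-pair endpoints but cannot change the count of open components), and the trivial circles removed during the reduction contribute the same factor $(-A^2-A^{-2})$ that the replacement rule assigns to the nontrivial circles surviving inside basis elements---so every closed curve of every state is accounted for with the correct weight. With the domain issue fixed by either route, the proof is complete, and the same argument with the single variable $\lambda$ gives Proposition~\ref{prop:kbsm-vs-kbp-un}, as you note.
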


\begin{example}\normalfont

\begin{figure}[ht]
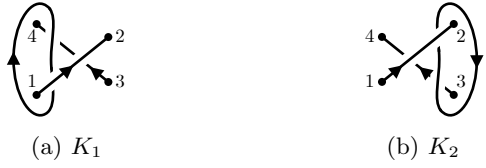

     \centering
     \begin{subfigure}[b]{0.30\textwidth}
         \centering
        \includegraphics[page=83]{images.pdf}
         \caption{$K_1$}
         \label{fig:mk1}
     \end{subfigure}
     \begin{subfigure}[b]{0.30\textwidth}
         \centering
         \includegraphics[page=84]{images.pdf}
         \caption{$K_2$}
         \label{fig:mk2}
     \end{subfigure}
        \caption{Two oriented multi-knotoids on $\R^2$.}
        \label{fig:mk}
\end{figure}

Consider the two oriented multi-linkoids $K_1$ and $K_2$ in $\R^2$ in Figure~\ref{fig:mk}.
We have $w(K_1) = w(K_2)=-1$. A straightforward computation shows us:
 \begin{equation*} \label{kbsm2}
    \begin{split}
    \overline{[[K_1]]}_{\hat\basis_{\R^2,1}} & = - 2A^{2}\exicon{85} - 2A^{4}\exicon{86} - A^{6} \exicon{87}, \\
    \overline{[[K_2]]}_{\hat\basis_{\R^2,1}} & = - 2A^{2}\exicon{85} - 2A^{4}\exicon{86} - A^{6} \exicon{88},\\
    \end{split}
\end{equation*}
    By Proposition~\ref{prop:kbsm-vs-kbp-or} we can obtain the Kauffman bracket polynomial from the Kauffman bracket skein modules by replacements
    $$
    \exicon{85} \rightarrow \lambda_{12}\lambda_{34}, \;\;
    \exicon{86} \rightarrow \lambda_{14}\lambda_{23}, \;\;
    \exicon{87} \rightarrow (-A^2-A^{-2})\lambda_{14}\lambda_{23}, \;\;
    \exicon{88} \rightarrow (-A^2-A^{-2})\lambda_{14}\lambda_{23}.
    $$
    The normalized ordered Kauffman bracket polynomial does not distinguish the two multi-linkoids:
    $$\overline{<K_1>}_{\bullet} = \overline{<K_2>}_{\bullet} 
    = -2A^2\lambda_{12}\lambda_{34} + (A^8-A^4)\lambda_{14}\lambda_{23}$$
    Observe that if we consider that the two multi-knotoids lie in the $S^2$, $K_1$ and $K_2$ are isotopic. Indeed,
    $$\overline{[[K_1]]}_{\basis_{S^2,1}} = \overline{[[K_1]]}_{\basis_{S^2,1}} 
    = - 2A^{2}\exicon{85} - 2A^{4}\exicon{86} - A^{6} \exicon{87}.$$

\end{example}



\section{Spatial graphs and multi-linkoids}\label{sec:spatial}

Similar to knotoids in $\mathbb{R}^2$, 
we can consider the topological setting of multi-linkoids in $\mathbb{R}^2$. A planar multi-linkoid can be uniquely lifted in the 3-space as a relative link in $\mathbb{R}^3 \setminus \{ \text{2$n$ parallel lines} \}.$ See Figure~\ref{fig:lines}. 
By contracting these lines at $\pm\infty$, they can be presented as spacial cases of generalized $\Theta$-graphs, which we will be the topic of this section.

\begin{figure}[ht]
     \centering
      \includegraphics[page=89]{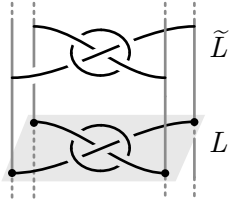}  
              \caption{A planar multi-linkoid $L$ and its lift $\tilde L$ in $\mathbb{R}^3$ as a relative knot.}
        \label{fig:lines}
\end{figure}

\begin{definition}\normalfont
A {\it spatial graph} is a presentation of a graph $G$ in $S^3$,  i.e.\ an embedding $f: G \hookrightarrow S^3$. A spatial graph diagram is a regular projection of a graph on $S^2 \subset S^3$
endowed under or over information.
\end{definition}

\begin{theorem}[Kauffman~\cite{Kauffman1989}] 
Two diagrams of spatial graphs are ambient isotopic if and only if they are related through a finite sequence of moves R-0, R-I, R-II, R-III involving the edges (Figure~\ref{fig:reid}) and moves R-IV and R-V involving the vertices (Figure~\ref{fig:vert}).
\end{theorem}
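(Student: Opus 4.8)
```latex
\textbf{Proof proposal.} The statement is a Reidemeister-type theorem for spatial graph diagrams, attributed to Kauffman. The plan is to follow the standard template for proving such ``moves generate the equivalence'' results, which has two directions. The easy direction is soundness: each of the moves R-0 through R-V is realized by an ambient isotopy of $S^3$, so any two diagrams related by a finite sequence of them present ambient isotopic graphs. This is checked move by move, verifying locally that the elementary isotopy supported in a small ball (or on the surface, for R-0) descends to the claimed diagrammatic change; the vertex moves R-IV and R-V are exactly the local pictures needed to slide a strand past a vertex and to rotate the cyclic order of edges around a vertex.

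The substantive direction is completeness: if $f_0, f_1 : G \hookrightarrow S^3$ are ambient isotopic, their generic diagrams differ by a finite sequence of the listed moves. The approach I would take is to put the ambient isotopy in general position with respect to the projection. First I would realize the ambient isotopy as a smooth one-parameter family $f_t$, and project to $S^2$ to obtain a one-parameter family of diagrams $D_t$. By a transversality/general-position argument, I would arrange that for all but finitely many $t$ the projection of $f_t(G)$ is a regular diagram, and that the finitely many exceptional parameters each correspond to a single codimension-one degeneracy. The key step is then to enumerate these degeneracies and match each to one of the allowed moves: an edge acquiring a vertical tangency (R-I type cusp), a strand becoming tangent to another strand (R-II), a triple point of edges (R-III), a strand crossing over or under a vertex (R-IV), and a vertex tangency / reordering of edge-germs at a vertex (R-V), together with the trivial changes absorbed by surface isotopy R-0. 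One must also confirm that no other codimension-one phenomenon can occur for a graph in general position, which is precisely where the vertices force the two extra move types beyond the classical knot case.

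I expect the main obstacle to be the bookkeeping at the vertices. For embedded circles the Reidemeister theorem is classical and the singularity analysis is standard, but a graph has trivalent (or higher-valence) vertices whose projected edge-germs can pass through one another and through which other strands can slide; one must verify that the degenerations occurring near a vertex are exhausted by R-IV and R-V and that no genuinely new local model is missing. In practice I would reduce this to a local statement near each vertex by choosing the general-position family to keep the vertices fixed and moving only short edge-arcs, then classify the local pictures. I would not grind through the full transversality estimates; since the result is a known theorem of Kauffman, the cleanest presentation is to cite~\cite{Kauffman1989} for the detailed singularity analysis and to record here only that the vertex moves R-IV and R-V are exactly the additional generators required beyond R-0, R-I, R-II, R-III.
```
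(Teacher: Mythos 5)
This theorem enters the paper as an imported, cited result: the paper states it with the attribution to Kauffman~\cite{Kauffman1989} and supplies no proof of its own, so there is no in-paper argument to compare yours against. Your outline is nevertheless structurally sound and follows the standard template: a soundness direction (each move is realized by an ambient isotopy supported in a ball) and a completeness direction (put a one-parameter family of projections in general position and enumerate the codimension-one degeneracies, matching each to a move). Two remarks. First, Kauffman's original argument in the cited paper is piecewise-linear rather than smooth: ambient isotopy of a PL-embedded graph is generated by elementary triangle moves, and one analyzes the projection of a single triangle move together with local moves at the vertices; your transversality formulation is the smooth-category counterpart of the same idea, so you would be proving the statement by an equivalent but differently grounded technique. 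Second, as you yourself acknowledge, your sketch defers the one piece of real mathematical content --- the verification that the local degeneracies near a vertex are exhausted by R-IV and R-V, where both moves must be treated for an arbitrary number of strands as in Figure~\ref{fig:vert}, not just the trivalent case --- to the citation. As a standalone proof that deferral is the gap that would need to be filled; as a proposal it correctly isolates where the work lies, and it is consistent with the paper's own decision to cite rather than reprove the result.
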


\begin{figure}[ht]
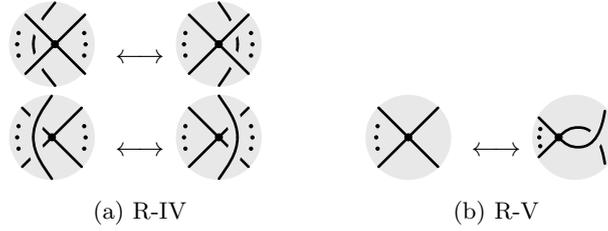

     \centering
     \begin{subfigure}[b]{0.30\textwidth}
         \centering
         \reid{52} $\longleftrightarrow$ \reid{53} \qquad \reid{54} $\longleftrightarrow$ \reid{55}
         \caption{R-IV}
         \label{fig:r4}
     \end{subfigure}
     \begin{subfigure}[b]{0.30\textwidth}
         \centering
         \reid{56} $\longleftrightarrow$ \reid{57}
         \caption{R-V}
         \label{fig:r5}
     \end{subfigure}

        \caption{Local Reidemeister moves for spatial graphs involving a vertex. The move R-IV can have an arbitrary number of strands (including zero) on the left-hand and the right-hand side of the diagram. The move R-V can have an arbitrary number of strands on the left-hand side of the diagram.}
        \label{fig:vert}
\end{figure}

\begin{definition}\normalfont

A {\it $\Theta$-graph} is a spatial graph with two labeled vertices $v_0, v_1$ and three labelled edges $e_+, e_0, e_-$ connecting the vertices. Two $\Theta$-graphs are considered to be isomorphic if there is an orientation preserving isotopy of $S^3$ between them that preserves the labelling of the vertices and the edges.

\end{definition}

One can work with $\Theta$-graphs through their diagrams in $S^2 \subset S^3$. We assume two $\Theta$-graph diagrams represent the same $\Theta$-graph in $S^3$ if they are related to each other by a sequence of spatial graph Reidemeister moves, presented in  Figure \ref{fig:r1}.

\begin{definition}\normalfont
A $\Theta$-graph is called \emph{simple} if the edges $e_+$ and $e_-$ bound a $2$-disk.

\end{definition}
A knotoid diagram in $S^2$ can be assigned to a simple $\Theta$-graph embedded in $S^3$ by corresponding its endpoints to two vertices, and the knotoid diagram is placed as the edge labeled by $e_0$ of the $\Theta$-graph. This construction was presented in \cite{Turaev2012}, where it was proven that it induces a bijection between the set of isotopy classes of spherical knotoids and the isomorphism classes of simple $\Theta$-graphs. In \cite{Turaev2012}, this bijection was used to prove the prime decomposition theorem of knotoids. Now we shall generalize the concept of a $\Theta$-graph and give a correspondence between linkoids in $S^2$ and the generalized $\Theta$-graphs. In this way, we extract invariants for linkoids from spatial graph invariants.

\begin{definition}\normalfont

A \emph{generalized} $\Theta$-graph is a connected graph embedded in $S^3$ with an even number, say $2n$, $n \in \mathbb{N}$, of trivalent vertices, labeled $v_i$ and $w_i$ where $i \in \{1,\ldots,n\}$,  and exactly two vertices $v_{\infty}$, $v_{-\infty}$ that are located at $N(0,0,0,1)\in S^3$ and $S(0,0,0,-1) \in S^3$, respectively.
The edge set $E(G)$ consists of edges $\{v_iw_i\}_{i=1}^n$, edges $\{v_i v_{\infty}\}_{i=1}^n$ and edges $\{v_i v_{-\infty}\}_{i=1}^n$ connecting $v_i$ and the vertices $v_{\infty}$ and $v_{-\infty}$, and edges 
 $\{w_i v_{\infty}\}_{i=1}^n$ and $\{w_i v_{-\infty}\}_{i=1}^n$ connecting $w_i$ and the points $v_{\infty}$ and $v_{-\infty}$.
  Note that each vertex of a pair $(v_i, w_i)$ is adjacent to both $v_{\infty}$ and  $v_{-\infty}$.
  

\end{definition}


%

\begin{definition}\normalfont

A generalized $\Theta$-graph is simple if for every $i,j \in \{1,\ldots,n\}$, the subgraph induced by edges $v_i v_{\infty}$, $v_i v_{-\infty}$, $w_j v_{\infty}$, and $w_j v_{-\infty}$  
is a cycle $C_4$ that can be embedded in a plane. 
 
\end{definition}

Let $L$ be a multi-linkoid diagram in $S^2$ with $2n$ endpoints. We assign to $L$ to a generalized $\Theta$-graph in embedded in $S^3$, which we denote by $\Theta(L)$.
The graph $\Theta(L)$ is constructed as follows. We label each component of $L$ by $i \in \{1,\ldots,n\}$, and denote by $v_i$ the tail and by $w_i$ and the head of component $i$,
such that component $i$ corresponds to an edge $v_iw_i$ in $\Theta(L)$. We also add the two vertices $v_{\infty}$ and $v_{-\infty}$ located at  $N(0,0,0,1)\in S^3$ and $S(0,0,0,-1) \in S^3$, respectively. 

Next, we add to the edge set of $\Theta(L)$ the edges  $v_i v_{\infty}$, $v_i v_{-\infty}$, $w_j v_{\infty}$, and $w_j v_{-\infty}$, such that they form a graph $C_4$ that bounds a disk
and the arcs $v_i v_{\infty}$ and $w_i v_{\infty}$ form only over\textnormal{-}crossings with the rest of the diagram and arcs $v_i v_{-\infty}$ and $w_i v_{-\infty}$ form only under\textnormal{-}crossings with the rest of the diagram.

The assigned graph $\Theta(L)$ is a simple generalized $\Theta$-graph that contains $2n+2$ vertices, $2n$ of them are of degree $3$, and the two vertices at the poles are of degree $2n$.  

\begin{figure}[H]
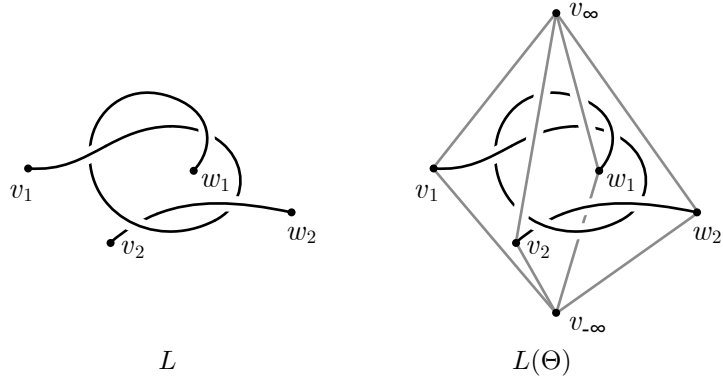

\centering
\begin{tabular}{cc}
\includegraphics[scale=1.0, page=79]{images.pdf} \qquad & \qquad
\includegraphics[scale=1.0, page=58]{images.pdf} \\
$L$ & $L(\Theta)$
\end{tabular}

\caption{A linkoid and the corresponding generalized $\Theta$-graph.}

\end{figure}

\begin{theorem}
There is a one-to-one correspondence between the set of all multi-linkoids in $S^2$ and the set of simple generalized $\Theta$-graphs.
\end{theorem}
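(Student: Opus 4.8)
The plan is to adapt Turaev's argument for the knotoid--simple-$\Theta$-graph bijection~\cite{Turaev2012} to the multi-component setting. The forward assignment $L \mapsto \Theta(L)$ is already described above, so the task is to prove it descends to a bijection between equivalence classes, by exhibiting an inverse and checking that the two maps are mutually inverse. Throughout, the closed (knot) components of $L$ play no role in the $\Theta$-structure: they are carried along unchanged as disjoint knotted circles in $S^3$ and match identically on both sides, so I may concentrate on the $n$ open components and the graph they produce.

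First I would verify that $\Theta$ is well defined on equivalence classes. A Reidemeister move R-0, R-I, R-II, or R-III on $L$ occurs in a disk disjoint from the endpoints, hence inside the union of the edges $v_iw_i$ of $\Theta(L)$; the identical local picture is a legal spatial-graph move on edges, carrying $\Theta(L)$ to $\Theta(L')$, so equivalent diagrams yield isomorphic graphs. I also need that the construction involves no essential choice: any two routings of the over-arcs $v_iv_\infty, w_iv_\infty$ to the north pole are isotopic relative to their endpoints because they lie entirely over the rest of the diagram, and symmetrically for the under-arcs to $v_{-\infty}$; the tail/head distinction records the orientation, and for the unordered statement the graphs are taken up to relabeling the pairs $(v_i,w_i)$. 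This shows $\Theta(L)$ is a well-defined simple generalized $\Theta$-graph up to isomorphism.

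Next I would construct the inverse. Given a simple generalized $\Theta$-graph $G$, simplicity supplies, for each $i$, planar disks bounded by the relevant $C_4$'s, and these let me isotope the pole-arcs into a standard collar of the poles: the arcs incident to $v_\infty$ pushed to lie over everything else and those incident to $v_{-\infty}$ pushed to lie under everything else. Deleting the two pole vertices together with all pole-arcs then leaves a diagram on $S^2$ whose strands are exactly the images of the edges $v_iw_i$, with $v_i$ a tail and $w_i$ a head; this is a multi-linkoid diagram $L(G)$. That $L(G)$ is independent of the standardizing isotopies again follows because over-arcs (resp. under-arcs) can be slid past one another freely.

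The main obstacle is the well-definedness of the inverse up to multi-linkoid equivalence: I must show that an ambient isotopy of $G$ --- generated in diagrams by the spatial-graph moves R-0--R-V, including the vertex moves R-IV and R-V at the trivalent vertices $v_i,w_i$ and at the high-valence poles --- projects to a sequence of R-0--R-III on $L(G)$, and crucially never induces the forbidden move. The decisive local check is at a trivalent endpoint: because $v_i$ carries exactly one over-arc (to $v_\infty$) and one under-arc (to $v_{-\infty}$), an R-IV move sliding $v_i$ past a transversal strand is absorbed by sliding those two pole-arcs, which already pass over and under that strand; hence the $v_iw_i$-strand is not moved over or under anything --- precisely the content of the forbidden move on the linkoid side. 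Moves at the poles are trivial after standardization, since all arcs meeting $v_\infty$ (resp. $v_{-\infty}$) are mutually parallel over- (resp. under-) strands. Finally I would confirm the two maps are mutually inverse: $L(\Theta(L)) = L$ holds on the nose by construction, while $\Theta(L(G))$ is isotopic to $G$ because the passage from $G$ to $L(G)$ was effected entirely through isotopies of $G$. Assembling these steps yields the claimed bijection.
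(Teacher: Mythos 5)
Your forward direction---multi-linkoid moves away from the endpoints become spatial-graph moves on the edges $v_iw_i$, endpoint-displacing isotopies become combinations involving R-IV and R-V, and the routing of the pole arcs involves no essential choice since over-arcs (resp.\ under-arcs) slide freely over (resp.\ under) everything---is essentially the paper's entire proof; the paper stops there and leaves the inverse direction implicit. You correctly recognize that a bijection needs more (an inverse map, its well-definedness, and mutual inverseness), and that the forbidden move is the crux. But the extra steps you supply are exactly where your argument has a genuine gap.

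The gap is in the well-definedness of the inverse. Your key local check (``an R-IV move at $v_i$ is absorbed by sliding the two pole-arcs, so the $v_iw_i$-strand never crosses anything'') only makes sense when the graph diagram is in \emph{standard form}, with every arc into $v_\infty$ lying over, and every arc into $v_{-\infty}$ lying under, the rest of the diagram; only on such diagrams does ``delete the pole arcs'' produce a linkoid at all. However, two standard-form diagrams of isotopic simple generalized $\Theta$-graphs are joined by a sequence of spatial-graph moves whose intermediate diagrams need not be remotely standard: a perfectly legal R-II move can drag $v_iv_{-\infty}$ over other strands and entangle the pole arcs arbitrarily before they return to standard position at the end. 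On those intermediate diagrams your ``projection'' to R-0--R-III is undefined, so a move-by-move local analysis at the vertices proves nothing about the whole sequence---and this is precisely where the content of the theorem (that the forbidden move never sneaks in) resides. Closing it requires either a normalization lemma (any sequence of graph moves between standard-form diagrams can be traded for one passing only through standard-form diagrams) or Turaev's strategy of identifying both sides with a third object, e.g.\ relative links in the complement of the standardized pole structure, where the work is done by showing the standardization is unique up to isotopy. A second, related gap: your standardization/surjectivity step assumes the disks bounded by the $4$-cycles $v_iv_\infty w_jv_{-\infty}$ can be used \emph{simultaneously} to push all $4n$ pole arcs into a collar of the poles. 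Pairwise unknottedness of constituent cycles does not imply global triviality of an embedded graph (Kinoshita's $\theta$-curve is the classical counterexample), so for $n\geq 2$ this simultaneous standardization is a substantive claim needing proof, not a consequence of the definition of simplicity as given.
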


\begin{proof}

It is clear that any of the R-I, R-II, R-III moves takes place locally away from the endpoints of a multi-linkoid diagram, and they transform to either one of the R-I, R-II, R-III moves of the spatial graph, or a combination of them, in case when the arcs, connecting vertices labeled by $v_i, w_j$ to the infinity vertices, conflict with the local move region on the corresponding generalized $\theta$-graph diagram. The isotopy moves that displace endpoints of a multi-linkoid diagram transform as a combination of spatial Reidemeister moves including R-IV and R-V-moves on the corresponding generalized $\theta$-graph diagram. See Figure \ref{fig:vertexmoves} for some of the instances.

\begin{figure}[H]
\centering

\includegraphics[scale=0.16]{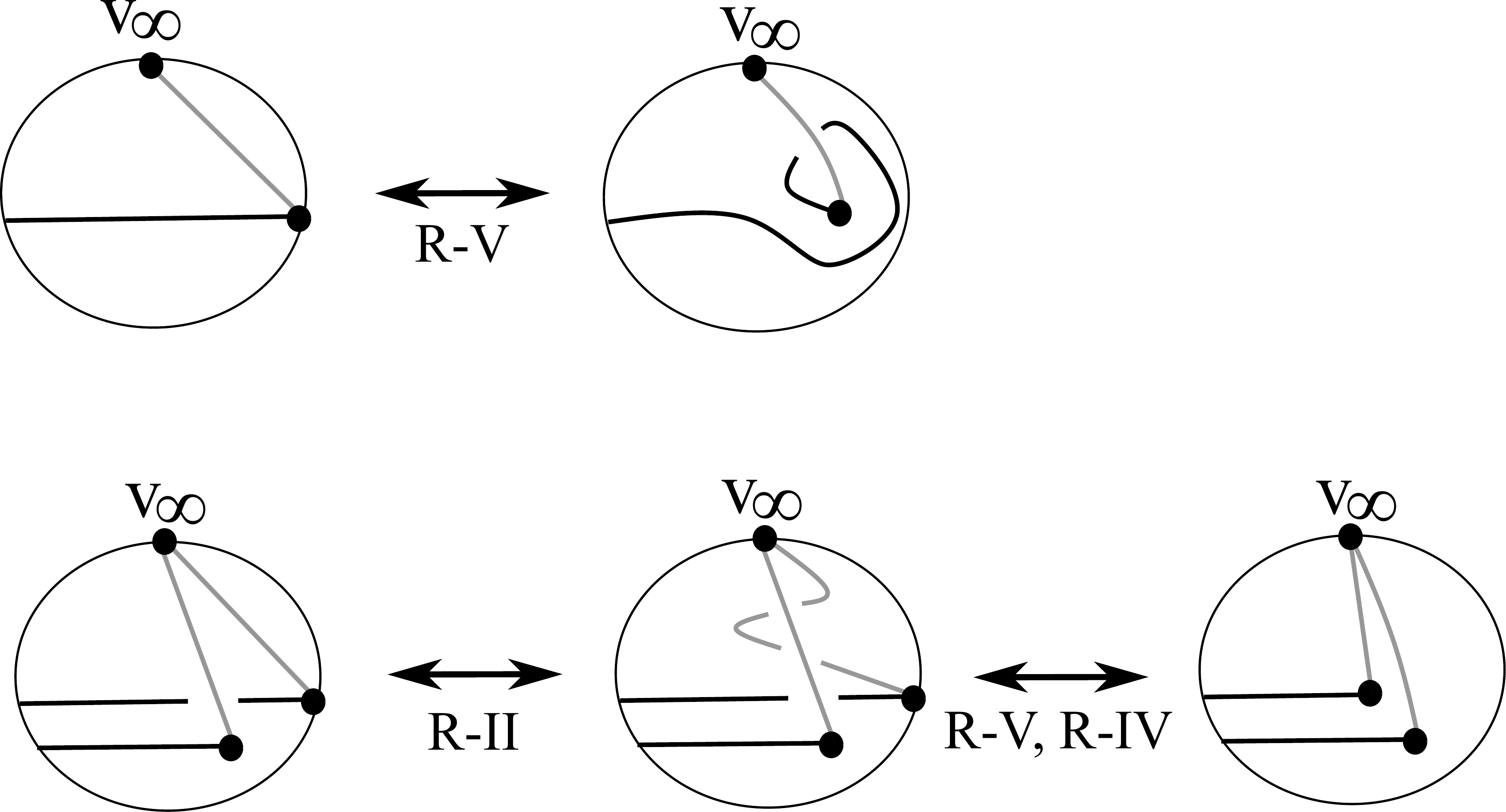}
\caption{Moves involving vertices}
\label{fig:vertexmoves}

\end{figure}

\end{proof}

\section{A colored version of Kauffman's $T$ invariant}\label{sec:T}

In~\cite{Kauffman1989} an invariant $T$ of spatial graphs in $S^3$ was introduced. For convenience, we repeat the construction.

Let $G$ be a spatial graph (a graph embedded in $S^3$ or $\mathbb{R}^3$) and let $v$ be a vertex of $G$ of degree $d$. We associate to $(G,v)$ a set consisting of $\frac{d(d-1)}{2}$ spatial graphs obtained by 
connecting two pairs of edges at this vertex and leaving all other edges as free ends (leafs) as in Figure~\ref{fig:repl}. We call such an operation a \emph{local replacement}.

\begin{figure}[H]
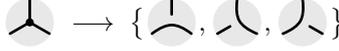

\centering
$$\icon{44} \;\longrightarrow\; \big\{ \icon{45},  \icon{46},  \icon{47} \big\}$$
\caption{Local replacements at a vertex.}\label{fig:repl}
\end{figure}

Let $T(G)$ be the set of local replacements made on every vertex of $G$, where, in addition, we remove all non-closed curves.

Clearly, $T(G)$ is a set of links in $S^3.$

\begin{theorem}[\cite{Kauffman1989}]
Let $G$ be a spatial graph. The collection $\Tc(G)$, taken up to ambient isotopy, is a topological invariant of $G$.
\end{theorem}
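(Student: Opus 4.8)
The plan is to invoke Kauffman's characterization of ambient isotopy quoted above: two spatial graph diagrams represent ambient isotopic graphs precisely when they are connected by a finite sequence of the moves R-0, R-I, R-II, R-III on edges and R-IV, R-V on vertices. It therefore suffices to show that the collection $\Tc(G)$, regarded as a set of ambient isotopy classes of links, is unchanged by each of these six moves; invariance then follows by transitivity. Since a local replacement alters the diagram only in neighborhoods of vertices, I would treat the edge moves and the vertex moves separately.

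First, for the edge moves R-0, R-I, R-II, R-III, the modification is supported in a disk disjoint from every vertex. Performing the local replacements and then the move yields the same diagram as performing the move and then the replacements, so for each fixed system of pairing choices at the vertices the two resulting links differ by exactly that edge move. As these are Reidemeister moves for links (and we work up to ambient isotopy, so R-I causes no framing obstruction), the corresponding links are ambient isotopic. Because the replacement choices on $G$ and $G'$ are indexed identically --- the same vertices of the same degrees --- this produces a choice-preserving bijection, hence an equality $\Tc(G)=\Tc(G')$ of sets of isotopy classes.

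Next, for R-IV, I would fix any system of pairing choices. The replacement at the vertex crossed by the sliding strand turns that vertex into a single arc joining two of its edges, the remaining edges becoming free ends that are then discarded. Under this replacement the R-IV move becomes the operation of sliding the strand across that arc, which is realized by a sequence of R-II and R-III moves on the link; hence each link of $\Tc(G)$ is ambient isotopic to the corresponding link of $\Tc(G')$, again via the identity bijection on choices.

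The subtle case, and the step I expect to be the main obstacle, is R-V, since it genuinely rearranges the edges incident to a vertex rather than acting in a disk away from it. The essential point is that $\Tc$ is built from \emph{all} $\tfrac{d(d-1)}{2}$ pairings at a degree-$d$ vertex, and this full set of pairings is symmetric under any permutation of the incident edges. The move R-V induces exactly such a permutation, together with a rerouting of the connecting arcs by an isotopy that is absorbed into R-I, R-II, and R-III moves on the link. Thus R-V determines a bijection of the replacement choices under which corresponding links are ambient isotopic, giving $\Tc(G)=\Tc(G')$ once more. I would expect the only delicate bookkeeping to be verifying that the reflection/rotation of edges really does act as a permutation of the pairings and that each connecting arc is correctly isotoped back; the remaining moves are immediate once the locality of local replacement is observed. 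It is worth noting that this argument also explains why $\Tc$ must use the full set of pairings: restricting to a proper subset would in general break invariance under R-V.
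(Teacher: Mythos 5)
Your proposal is correct and takes essentially the same approach as the paper: the paper cites this statement to Kauffman without reproving it, but its proof of the colored generalization is exactly your argument --- the moves R-I through R-IV leave each element of the collection unchanged up to ambient isotopy, while R-V merely permutes the set of replacement choices at the affected vertex. Your write-up just supplies the details (locality of the edge moves, the R-IV slide across the surviving arc, and the permutation action of R-V on pairings) that the paper's one-line proof leaves to the reader.
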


We will extend $T$ to an invariant of edge-colored graphs, $\Tc$, which we will use as an invariant of generalized $\Theta$-graphs.

Let $(G,C)$ be an edge-colored spatial graph, i.e.\ a spatial graph $G$ equipped with a coloring function $c: E(G) \rightarrow C$ for a set of colors $C$.
We define a colored local replacement as a local replacement of uncolored graphs, where, in addition, we color the new arcs by a subset $\gamma \subseteq C$, 
such that $\gamma$ consists of colors of the edges in the preimage of the replacement as in Figure~\ref{fig:crepl}.

\begin{figure}[H]
\centering
$$\bigicon{48}  \;\longrightarrow\;  \Big\{ \bigicon{49},  \bigicon{51},  \bigicon{50} \Big\}$$
\caption{Colored local replacements at a vertex}\label{fig:crepl}
\end{figure}

Now $\Tc(G)$ is the set of colored local replacements made on every vertex of $G$, where we remove all non-closed curves (unknotted arcs).
Clearly, the value of $\Tc(G)$ is a set of colored links with colors in the power set $\mathcal P(C)$. Such sets can be distinguished using any colored link invariant, for example, 
the multivariate Alexander polynomial~\cite{Torres1953, Morton1999} or the colored Jones-type polynomial~\cite{Aicardi2016}.

\begin{theorem}
Let $(G,c)$ be an edge-colored spatial graph, the collection of colored links $\Tc(G)$, taken up to ambient isotopy, is a topological invariant of $(G,c)$.
\end{theorem}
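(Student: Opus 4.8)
The plan is to reduce the statement to Kauffman's theorem for the uncolored invariant $T$ (stated above) and then carry the colors through the argument. Ambient isotopy of an edge-colored spatial graph $(G,c)$ is generated by the edge moves R-0--R-III and the vertex moves R-IV, R-V, with the understanding that the coloring is transported along: each such move sends every edge of $G$ to a well-defined edge of the new diagram, so the edges -- and therefore their colors -- persist throughout. Forgetting the colors turns $\Tc(G)$ into Kauffman's set $T(G)$, which is already known to be invariant; hence it suffices to check that, under each generating move, the two sides of $\Tc(G)$ agree as sets of \emph{colored} links up to colored ambient isotopy. The observation that drives every case is that the color attached to a connecting arc produced by a local replacement is, by definition, the subset $\gamma \subseteq C$ of colors of the two edges joined at that vertex; since those edges persist, so does $\gamma$.

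First I would dispose of the edge moves R-0--R-III. Each of these is supported in a disk disjoint from all vertices, so the colored local replacements performed at the vertices are literally unchanged and only the strands of the resulting links are modified. Every link in $\Tc(G)$ is thereby transformed into the corresponding link by the same Reidemeister move applied to a genuine colored link diagram, and such a move never alters the color carried by a strand. Invariance under R-0--R-III then follows from the invariance of colored links under the colored Reidemeister moves, together with Kauffman's theorem for the underlying uncolored set.

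Next I would treat R-IV, in which a strand slides across a vertex $v$. Fixing a single colored replacement at $v$, the sliding strand either passes over or under the one connecting arc created there -- an R-II move on the resulting colored link, preserving the colors of both the strand and the arc -- or it passes over an edge-end that is afterwards deleted as a free end, in which case the slide has no effect. Ranging over all $\frac{d(d-1)}{2}$ replacements at $v$ and invoking Kauffman's theorem for the uncolored shapes, both sides return the same set of links, and since each elementary step preserves all colors they agree as colored links.

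The delicate case -- and the main obstacle -- is R-V, which rearranges the cyclic order of the edges incident to $v$ without changing which edges are incident. Here I would index the $\frac{d(d-1)}{2}$ colored replacements by the unordered pairs of the (topologically persistent) edges at $v$; R-V leaves this index set untouched, since it merely repositions edges. For each fixed pair $\{e_i, e_j\}$, Kauffman's local argument provides an ambient isotopy -- a sequence of R-0--R-III moves on the resulting link -- carrying the before-diagram to the after-diagram, and the color $\gamma = \{c(e_i), c(e_j)\}$ of the connecting arc is determined by the unordered pair alone, hence is unaffected by the repositioning and is transported unchanged along that isotopy. The point to verify with care is exactly this independence of $\gamma$ from the cyclic position of the edges, which is what makes R-V act as the identity on the set of colored replacements. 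Having checked all generating moves, $\Tc(G)$ is invariant under ambient isotopy of $(G,c)$, proving the claim.
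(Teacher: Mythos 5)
Your proposal is correct and follows essentially the same route as the paper: checking each generating move, observing that R-0--R-IV leave the colored replacements unchanged up to colored Reidemeister moves on the resulting links, and that R-V merely permutes the set of colored replacements since the color of each connecting arc depends only on the unordered pair of incident edges. The paper compresses all of this into a two-line verification with a figure comparison, whereas you spell out the case analysis explicitly; the substance is identical.
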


\begin{proof}
It is easy to check that Reidemeister moves R-I -- R-IV do not change the isotopy types of elements in $\Tc(G)$ and the move R-V permutes the elements (compare Figures~\ref{fig:crepl} and~\ref{fig:replrv}).
\end{proof}

\begin{figure}[H]
\centering
$$\raisebox{-.2em}{\bigicon{75}} \;\longrightarrow\; \bigg\{ \raisebox{-.2em}{\bigicon{76}},  \raisebox{-.2em}{\bigicon{77}},  \raisebox{-.2em}{\bigicon{78}} \bigg\} = \Big\{ \bigicon{49},  \bigicon{50},  \bigicon{51} \Big\}$$
\caption{Colored local replacements at a vertex.}\label{fig:replrv}
\end{figure}

\noindent {\bf Example.}
\begin{figure}
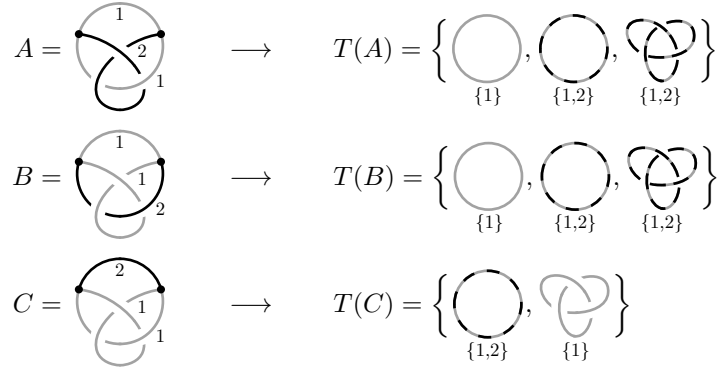

\begin{equation*}
\begin{split}
A &= \raisebox{-2.2em}{\includegraphics[page=59]{images.pdf}}  \qquad \longrightarrow \qquad 
T(A) = \bigg\{ \raisebox{-2.2em}{\includegraphics[page=72]{images.pdf}}, 
\raisebox{-2.2em}{\includegraphics[page=71]{images.pdf}}, 
\raisebox{-2.2em}{\includegraphics[page=74]{images.pdf}} \bigg\}
\\
B &= \raisebox{-2.2em}{\includegraphics[page=69]{images.pdf}}  \qquad \longrightarrow \qquad 
T(B) = \bigg\{ \raisebox{-2.2em}{\includegraphics[page=72]{images.pdf}}, 
\raisebox{-2.2em}{\includegraphics[page=71]{images.pdf}}, 
\raisebox{-2.2em}{\includegraphics[page=74]{images.pdf}} \bigg\}
\\
C &= \raisebox{-2.2em}{\includegraphics[page=70]{images.pdf}}  \qquad \longrightarrow \qquad 
T(C) = \bigg\{ 
\raisebox{-2.2em}{\includegraphics[page=71]{images.pdf}}, 
\raisebox{-2.2em}{\includegraphics[page=73]{images.pdf}} \bigg\}
\end{split}
\end{equation*}
\caption{Three 2-colorings of  $\Theta 3_1$ and their values of $\Tc$.}\label{fig:abc}
\end{figure}
Consider the $\Theta$-curve $\Theta 3_1$ from~\cite{Moriuchi2009}, 
where we color two edges with color 0 and one edge with color 1 as depicted in Figure~\ref{fig:abc} (in the virtue of bonded knots independently introduced in~\cite{Gabrovsek2021} and~\cite{Adams2020}). We have three coloring choices of colored graphs, which we name $A$, $B$, and $C$.

It holds $\Tc(A) = \Tc(B) \neq \Tc(C)$. It is easy to verify that $A$ and $B$ are ambient isotopic and $C$ is neither ambient isotopic to $A$ nor $B$, thus $\Tc$ is able to detect the two isotopy classes.


Given a multi-linkoid $L$, we can construct different invariants, based on $\Tc$, by modifying the coloring function on $\Theta(L)$.
The invariants vary on strength and function:

\begin{itemize}
\item an invariant of unordered unoriented multi-linkoids: choose $c(v_i w_i) = 0$ for the edges of $K$ and choose $c(v_i v_{\pm \infty})=c(w_i v_{\pm \infty}) = 1$ for the edges adjacent to points $v_\infty$ and $v_{-\infty}$,
\item an invariant of unordered unoriented multi-linkoids (stronger version): choose $c(v_i w_i) = 0$ for the edges of $K$ and choose $c(v_i v_{\infty})=c(w_i v_{\infty}) = 1$ and $c(v_i v_{-\infty})=c(w_i v_{-\infty}) = -1$,
\item an invariant of unordered oriented multi-linkoids: choose $c(v_i w_i) = 0$ for the edges of $K$ and choose $c(v_i v_{\pm\infty})=1$ and $c(w_i v_{\pm\infty})=2$,
\item an invariant of unordered oriented multi-linkoids (stronger version): choose $c(v_i w_i) = 0$ for the edges of $K$ and choose $c(v_i v_{\infty})=1$, $c(v_i v_{-\infty})=-1$, $c(w_i v_{\infty})=2$, $c(w_i v_{-\infty})=-2$,
\item an invariant of ordered multi-linkoids: choose $c(v_i w_i) = i$ for the edges of $K$ and choose $c(v_i v_{\pm \infty})=(i,1)$ and $c(w_i v_{\pm\infty})=(i,2)$,
\item an invariant of ordered multi-linkoids (stronger version): choose $c(v_i w_i) = i$ for the edges of $K$ and choose $c(v_i v_{\infty})=(i,1)$, $c(w_i v_{\infty})=(i,2)$, $c(w_i v_{-\infty})=(i,-1)$ and $=c(w_i v_{-\infty})=(i,-2)$.
\end{itemize}

\section*{Acknowledgements}
The first author was supported by the Slovenian Research Agency program P1-0292.\\[.5em]
\noindent This manuscript has no associate data.

\bibliographystyle{abbrv}
\bibliography{biblio}

\end{document}